\DeclareMathOperator{\im}{im}
\DeclareMathOperator{\spec}{Spec}
\DeclareMathOperator{\Proj}{\underline{Proj}}
\DeclareMathOperator{\id}{id}
\DeclareMathOperator{\aut}{Aut}
\DeclareMathOperator{\autu}{\underline{Aut}}
\DeclareMathOperator{\pic}{Pic}
\DeclareMathOperator{\coker}{coker}
\DeclareMathOperator{\gl}{GL}
\DeclareMathOperator{\rk}{rk}
\DeclareMathOperator{\sym}{Sym}
\DeclareMathOperator{\spin}{Sp}
\DeclareMathOperator{\arf}{Arf}
\DeclareMathOperator{\chr}{char}
\DeclareMathOperator{\ann}{Ann}
\DeclareMathOperator{\isom}{Iso}
\DeclareMathOperator{\gr}{Gr}
\DeclareMathOperator{\syzy}{syzy}
\DeclareMathOperator{\Syzy}{Syzy}
\newcolumntype{M}{>{$}c<{$}} 
\newcolumntype{L}{>{$}l<{$}}
\newcommand{\coarse}{\mathfrak{coarse}}
\DeclareMathOperator{\Hom}{\mathscr{H}\kern -2pt om}
\DeclareMathOperator{\Ext}{\mathscr{E}\kern -1.5pt xt}
\newtheorem{theorem}{Theorem}[section]
\numberwithin{theorem}{section} 
\newcommand{\nameofthm}{}
\newtheorem{genericThm}[theorem]{\nameofthm}
\newtheorem*{theorem*}{Theorem}
\newtheorem*{lemma*}{Lemma}
\newtheorem*{proposition*}{Proposition}
\newtheorem*{witt}{Witt's Lemma}
\newtheorem{lemma}[theorem]{Lemma}
\newtheorem{corollary}[theorem]{Corollary}
\newtheorem{proposition}[theorem]{Proposition}
\theoremstyle{definition}
\newtheorem*{definition*}{Definition}
\newtheorem{definition}[theorem]{Definition}
\newtheorem{notation}[theorem]{Notation}
\newtheorem{remark}[theorem]{Remark}
\newtheorem{example}[theorem]{Example}
\newtheorem*{notation*}{Notation}
\newtheorem{algorithm}[theorem]{Algorithm}
\newcommand{\nameofenv}{}
\newtheorem*{generic}{\nameofenv}
\newcommand{\nameOfNumEnv}{}
\newtheorem{genericNumbered}[theorem]{\nameOfNumEnv}
\newcommand{\mg}{\mathcal{M}_g}
\newcommand{\mgbar}{\overline{\mathcal{M}}_g}
\newcommand{\mgnbar}{\overline{\mathcal{M}}_{g,n}}
\newcommand{\cgnbar}{\overline{\mathcal{C}}_{g,n}}
\newcommand{\cgbar}{\overline{\mathcal{C}}_g}
\newcommand{\sg}{\mathcal{S}_g}
\newcommand{\sgbar}{\overline{\mathcal{S}}\vphantom{\mathcal{S}}_g}
\newcommand{\sgtimesbar}{\overline{\mathcal{S}}\vphantom{\mathcal{S}}^{\times m}_g}
\newcommand{\sgtimesbarm}[1]{\overline{\mathcal{S}}\vphantom{\mathcal{S}}^{\times #1}_g}
\newcommand{\sgm}{\mathcal{S}^m_g}
\newcommand{\sgmbar}{\overline{\mathcal{S}}\vphantom{\mathcal{S}}^m_g}
\newcommand{\sgt}{\sgm} 
\newcommand{\sgtm}[1]{\mathcal{S}^{#1}_g}
\newcommand{\sga}{\mathcal{S}^{\mathfrak{a}}_g}
\newcommand{\sgra}{\mathcal{S}^{(R,\mathfrak{a})}_g}
\newcommand{\csnbar}{\overline{\mathcal{S}}(\mathcal{N})}
\newcommand{\csnm}{\mathcal{S}^m(\mathcal{N})}
\newcommand{\csnmlim}{\mathcal{S}^m_{\mathrm{lim}}(\mathcal{N})}
\newcommand{\csnmm}[1]{\mathcal{S}^{#1}(\mathcal{N})}
\newcommand{\csnmbar}{\overline{\mathcal{S}}\vphantom{\mathcal{S}}^m(\mathcal{N})}
\newcommand{\csnmmbar}[1]{\overline{\mathcal{S}}\vphantom{\mathcal{S}}^{#1}(\mathcal{N})}
\newcommand{\art}{\operatorname{Art}}
\def\imod#1{\allowbreak\mkern10mu({\operator@font mod}\,\,#1)}
\newcommand{\del}{\partial}
\newcommand{\p}{\Pp^1}
\newcommand{\inv}{^{-1}}
\newcommand{\set}[1]{\{ #1 \}}
\newcommand{\tos}{\twoheadrightarrow}
\newcommand{\toi}{\hookrightarrow}
\newcommand{\isoto}{\overset{\sim}{\to}}
\newcommand{\iso}{\overset{\sim}{\to}}
\newcommand{\Sets}{{(\text{Sets})}}
\newcommand{\hart}{\text{\^{A}rt}}
\newcommand{\Cc}{\mathbbm{C}}
\newcommand{\Ff}{\mathbbm{F}}
\newcommand{\Nn}{\mathbbm{N}}
\newcommand{\Pp}{\mathbbm{P}}
\newcommand{\Zz}{\mathbbm{Z}}
\renewcommand{\H}{\mathrm{H}}
\newcommand{\xa}{\mathfrak{a}}
\newcommand{\xF}{\mathfrak{F}}
\newcommand{\xj}{\mathfrak{j}}
\newcommand{\xo}{\mathfrak{o}}
\newcommand{\xp}{\mathfrak{p}}
\newcommand{\xL}{\mathfrak{L}}
\newcommand{\xm}{\mathfrak{m}}
\newcommand{\ca}{\mathcal{A}}
\newcommand{\cc}{\mathcal{C}}
\newcommand{\ce}{\mathcal{E}}
\newcommand{\cf}{\mathcal{F}}
\newcommand{\cg}{\mathcal{G}}
\newcommand{\cl}{\mathcal{L}}
\newcommand{\cm}{\mathcal{M}}
\newcommand{\cn}{\mathcal{N}}
\newcommand{\co}{\mathcal{O}}
\newcommand{\cR}{\mathcal{R}}
\newcommand{\cx}{\mathcal{X}}
\newcommand{\cy}{\mathcal{Y}}
\newcommand{\stacks}[1]{\cite[\href{http://stacks.math.columbia.edu/tag/#1}{Tag #1}]{stacks-project}}
\newcommand{\tphi}{\widetilde \varphi}
\newcommand{\hco}{\hat \co}
\author[E.~C.~Sert\"oz]{Emre Can Sert\"oz}
\address{Emre Can Sert\"oz\\
Leibniz University Hannover, Welfengarten 1, 30167 Hannover, Germany}
\email{emre@sertoz.com}
\title[A compactification of the moduli space of multiple-spin curves]{A compactification of the moduli space of\\ multiple-spin curves} 
\date{\today}
\subjclass[2020]{14H10, 14D23, 14B10}
\keywords{spin curves, compactification, moduli, theta-characteristics, roots of line bundles}
\begin{document}

\begin{abstract}
We construct a smooth Deligne--Mumford compactification for the moduli space of curves with an $m$-tuple of spin structures using line bundles on quasi-stable curves as limiting objects, as opposed to line bundles on stacky curves. For all $m$, we give a combinatorial description of the local structure of the corresponding coarse moduli spaces. We also classify all irreducible and connected components of the resulting moduli spaces of multiple-spin curves. 
\end{abstract}

\maketitle

\section{Introduction} 

The configuration of the $28$ bitangents of a smooth quadric curve in the plane form a beautiful chapter in classical algebraic geometry~\cite{riemann-g3,dolgachev}. A similar structure is observed again with the $120$ tritangents of a generic canonical space sextic~\cite{coble--theta-book,DelCentina-Recillas1983,lehavi15,lehavi22,bruin-sertoz} or, more generally, with the ${2^g \choose 2}$ contact hyperplanes of a generic canonically embedded curve of genus~$g$~\cite{caporaso-sernesi}. The unifying idea is that these hyperplanes correspond to odd spin structures on the curve, i.e., to square roots of the canonical bundle of the curve with a non-zero section.

In the study of algebraic curves, degeneration techniques play an important role. However, there is a technical gap prohibiting the study of ``configurations'', or tuples, of spin structures on curves via degeneration: there is no compactification of the relevant moduli spaces that uses curves with line bundles as limiting objects. There are notable exceptions where this gap was partially addressed, or circumvented, with great success. Most prominently, the moduli space of spin curves was compactified by Cornalba~\cite{cornalba} which gave rise to a complete Kodaira classification of the resulting spaces~\cite{farkas-even,farkas-verra-even}. These moduli spaces allow only for the study of a \emph{single} spin structure on each curve. On the other extreme, Caporaso and Sernesi~\cite{caporaso-sernesi} considered the degeneration of \emph{all} odd spin structures to prove that a generic curve is determined by its contact hyperplanes. The work~\cite{caporaso-sernesi} could largely avoid the aforementioned technical gap because one can degenerate curves together with all of their odd or even spin structures without constructing a new moduli space. Fan, Jarvis and Ruan~\cite{fan-jarvis-ruan} do indeed construct a compactification of the moduli space of curves with more than one spin structure using \emph{stacky} curves as limiting objects. 

We find that the quasi-stable curves of Cornalba retain their intuitive appeal in approaching projective geometric problems. In particular, the study of effective limit linear systems keeps their geometric flavor when working with quasi-stable curves. For this reason, we develop here a compactification of the moduli spaces of ``multiple spin curves'' from the point of view of quasi-stable curves. This construction opens the door to studying multiple and fractional limit linear series on quasi-stable curves. We establish local structural properties of these moduli spaces, such as the smoothness of the moduli stacks and the quotient singularity types of the coarse moduli spaces. We also classify the connected components of these spaces. We describe these results in greater detail in Section~\ref{sec:idea}.  

\subsection{Idea of the construction and main results}\label{sec:idea}

Let $k$ be an algebraically closed field of characteristic not two. A \emph{spin structure} on a proper smooth curve $C/k$ is a pair $(L,\alpha\colon L^{\otimes 2} \isoto \omega_{C/k})$ where $L$ is a line bundle on $C$ and $\omega_{C/k}$ is the canonical bundle of $C$. The tuple $(C,L,\alpha)$ is called a \emph{spin curve}. The moduli space $\sg$ of spin curves of genus $g$ is quasi-finite over the moduli space $\mg$ of curves of genus $g$.

Fix a positive integer $m$. A sequence $(L_i, \alpha_i)_{i=1}^m$ of spin structures on $C$ will be called a \emph{multiple-spin structure on $C$}. The tuple $(C,(L_i,\alpha_i)_{i=1}^m)$ will be called a \emph{multiple-spin curve}. For each genus $g$, the moduli space of multiple-spin curves will be denoted by $\sgm$ which is the $m$-fold fiber product $\sg \times_{\mg} \dots \times_{\mg} \sg$. The purpose of this paper is to give a natural compactification of $\sgm$, determine its irreducible components, and describe its basic geometric properties. 

There is a compactification $\sgbar \to \mgbar$ of $\sg$ over the moduli space $\mgbar$ of stable curves, whose coarse moduli scheme over the complex numbers was originally constructed by Cornalba~\cite{cornalba}. Later, the work was completed by Jarvis~\cite{jarvis-torsion-free} by the construction of the moduli stack.

Cornalba compactifies $\sg$ by considering \emph{limit spin curves}, which are tuples of the form $(X,\cl,\alpha\colon \cl^{\otimes 2} \to \omega_{X/k})$ where: 
\begin{itemize}
  \item $X$ is a \emph{quasi-stable} curve of genus $g$,
  \item $\cl$ is a line bundle of degree $g-1$ on $X$ and degree 1 on unstable components,
  \item $\alpha$ is an isomorphism away from the unstable components and zero on the unstable components. 
\end{itemize} 
Note that any unstable component of $X$ is isomorphic to $\p$ and such a component contains exactly two nodes of $X$. The stabilization of $X$ contracts each unstable component to a point.

Our compactification will be based upon that of Cornalba's and, in particular, on the product $\sgtimesbar \colonequals  \sgbar \times_{\mgbar} \dots \times_{\mgbar} \sgbar$. Although the product $\sgtimesbar$ is compact, the objects it parametrizes are not entirely natural. By definition, an element $(\pi_i\colon X_i \to C, \cl_i, \alpha_i)_{i=1}^{m} \in \sgtimesbar(k)$ is based upon a tuple of curves $(X_i)_{i=1}^m$ whose stabilizations $C$ are all identified but the unstable components of $X_i$'s remain distinct. Not only is it unnatural to work with $m$ partially identified curves, but this causes the space $\sgtimesbar$ to be non-normal. We prove in Proposition~\ref{prop:normalize} that the compactification we give normalizes $\sgtimesbar$.

Roughly, the normalization of $\sgtimesbar$ could be constructed by adding to each tuple $(\pi_i\colon X_i \to C, \cl_i, \alpha_i)_{i=1}^{m} \in \sgtimesbar$ a master curve $X$ to dominate all $X_i$. For each $i=1,\dots,m$ let $\chi_i \subset C$ denote the subset of the nodes over which $\pi_i\colon X_i \to C$ is not an isomorphism. We define a ``destabilization'' $\pi \colon  X \to C$ of $\chi = \cup_{i=1}^m \chi_i$ by inserting a rational curve to separate the branches at each node in $\chi$ (see Definition~\ref{def:blow-up}). For each $i=1,\dots,m$ we can factor the map $\pi$ into $\pi_i \circ \rho_i$ using a partial stabilization map $\rho_i \colon  X \to X_i$.
Denote by $(\cl_i',\alpha_i')$ the pullback of $(\cl_i,\alpha_i)$ via $\rho_i$. At this point the intermediate objects $(\pi,\cl_i)$ can be forgotten and we may consider the tuple $(X,(\cl_i',\alpha_i')_{i=1}^m)$ as a natural limit of multiple-spin curves.

We must warn, however, that this construction needs to be refined as there are typically infinitely many choices of maps $(\rho_i)_{i=1}^m$ leading to infinitely many non-equivalent tuples $(\cl_i',\alpha_i')_{i=1}^m$, even modulo automorphisms of $X$. In other words, the resulting moduli space of such tuples does not have finite fibers over $\mgbar$.

\begin{figure}[h]
  \centering
  \begin{tikzcd}
                   & X   \arrow[dl,"\rho_1"'] \arrow[d, "\rho_2"] \arrow[drr,"\rho_m"] &       & \\
    X_1 \arrow[dr,"\pi_1"'] & X_2 \arrow[d,"\pi_2"]                        & \dots & X_m \arrow[dll,"\pi_m"]\\
                   & C                                    &       &
  \end{tikzcd}
  \caption{The unstable components in the master curve $X$ fully contract to give $C$. Contracting various subsets of all unstable components of $X$ we can obtain the $X_i$'s.}
  \label{fig:master_curve}
\end{figure}

\begin{figure}[h]
  \centering
  \includegraphics[width=0.5\textwidth]{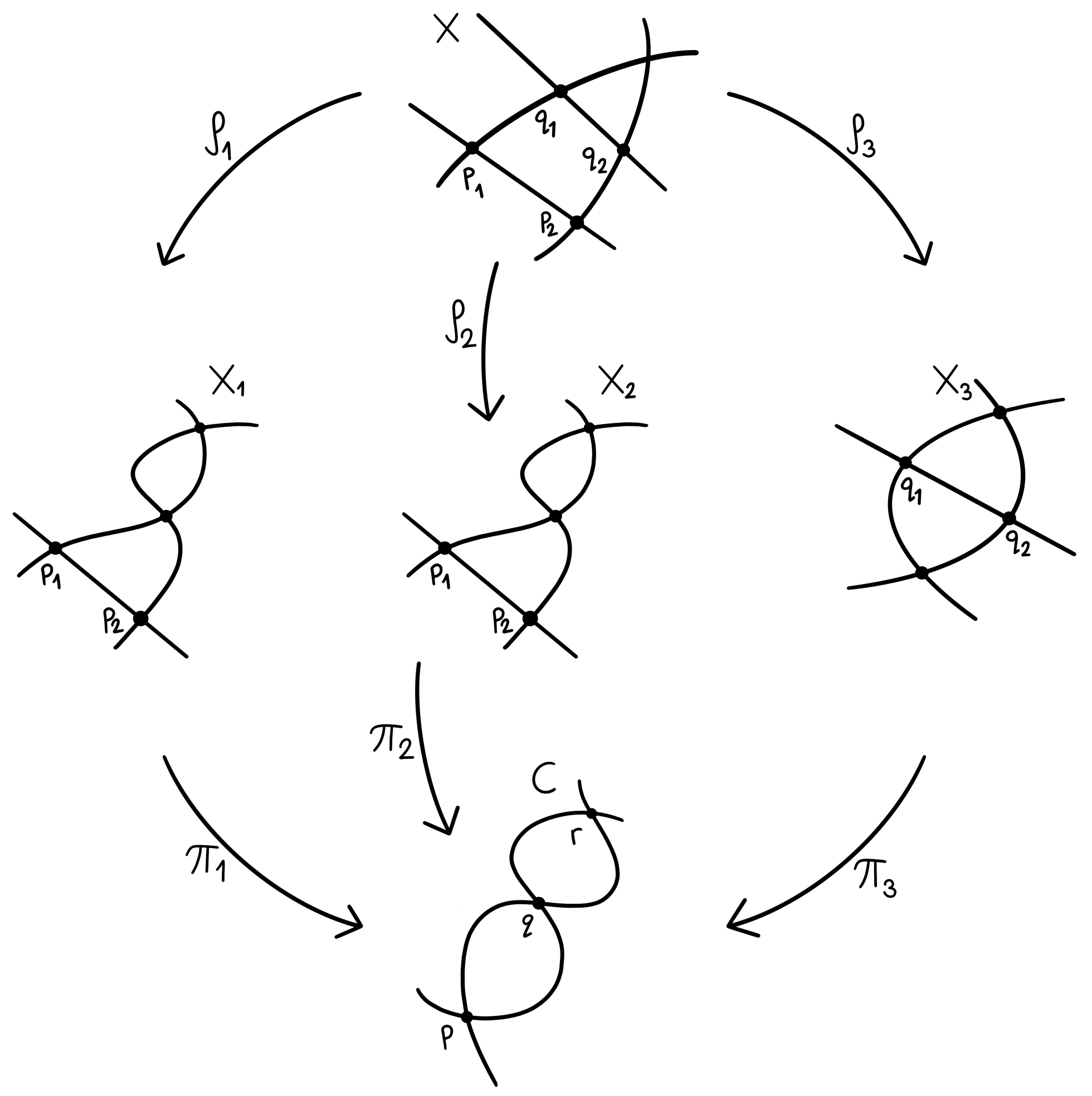}
  \caption{An example with $m=3$ illustrating the construction in Figure~\ref{fig:master_curve}. The base curve $C$ consists of two smooth components meeting to form three nodes $p,q,r$. The curves $X_1$ and $X_2$ each contract an unstable component $\p$ over $p$, whereas $X_3$ contracts a $\p$ over $q$. The master curve $X$ contracts a $\p$ over $p$ and $q$ each. The morphisms $\rho_1$ and $\rho_2$ from $X$ serve to identify the $\p$'s in $X_1$ and $X_2$ over $p$.} 
  \label{fig:example}
\end{figure}

We fix the issues of the rough construction by imposing a constraint on the choice of $\rho_i$. Infinitely many non-equivalent choices of $\rho_i$'s occur only when there is a pair of indices $i\neq j$ for which $X_i$ and $X_j$ have unstable components over the same node of $C$, i.e., when $\chi_i \cap \chi_j \neq \emptyset$. In this case, the line bundles $\cl_i$ and $\cl_j$ both have degree one on the unstable components lying above $\chi_i \cap \chi_j$ and they can be used to stabilize these components. 

We will, then, allow only for sequences of maps $(\rho_i)_{i=1}^m$ where for every $i,j$ and every $x \in \chi_i \cap \chi_j$ the line bundles $\rho_i^* \cl_i^{\otimes 2}$ and $\rho_j^*\cl_j^{\otimes 2}$ are isomorphic in an open neighbourhood of $\pi\inv(x) \subset X$. The resulting tuple $(X,(\cl_i',\alpha_i')_{i=1}^m)$ will be called a \emph{limit multiple-spin curve}. See Section~\ref{sec:multiple_spin} for examples and an intrinsic definition which encapsulates \emph{families} of limit multiple-spin curves. We denote the moduli space of limit multiple-spin curves by $\sgmbar$.

\begin{theorem}\label{thm:intro_compactify}
  The moduli space of limit multiple-spin curves $\sgmbar$ is a smooth and proper Deligne--Mumford stack. Moreover, the canonical inclusion $\sgm \toi \sgmbar$ is dense and open. The forgetful map $\sgmbar \to \mgbar$ is quasi-finite.
\end{theorem}
\begin{remark}
  This is a special case of our Theorem~\ref{thm:main}. Another special case gives a similar compactification for the moduli space of marked curves with roots of the twisted canonical bundle. Section~\ref{sec:proof_main} explains these deductions.
\end{remark}

For the birational classification of moduli spaces, understanding the nature of the singularities of the coarse moduli spaces associated to the moduli stacks is often a necessary step~\cite{harris-mumford--kodaira,ludwig,farkas-ludwig,chiodo-farkas}. The singularities of the coarse moduli space associated to $\sgmbar$ are finite quotient singularities. These quotients are described by the action of the automorphism groups of multiple-spin curves on their local deformation functors. In Section~\ref{sec:aut_groups} we give a completely combinatorial description of these group actions using a form of dual graph associated to each curve, see Theorem~\ref{thm:aut_action} and Proposition~\ref{prop:combinatorial_aut}.

\medskip

A complete classification of the connected components of $\sgmbar$ is possible. Note that $\sgmbar$ is smooth and, therefore, its irreducible and connected components coincide. Moreover, $\sgm$ is Zariski dense in $\sgmbar$ so the irreducible and connected components of these two spaces coincide. 

Given an $m$-spin structure $(L_i,\alpha_i)_{i=1}^m$ on a genus $g$ curve $C$ we call the associated syzygy relations to be the tuple 
\begin{equation}
\left( h^0(L_i) \mod 2;\,\, h^0(L_j\otimes L_k \otimes L_1^\vee) \mod 2 \right)_{\substack{1 \le i \le m \\ 1 < j < k \le m}} \in \Ff_2^{m + {m-1 \choose 2}}. 
\end{equation}
Any tuple that can be obtained in this manner will be called an $(m,g)$-\emph{syzygy relation}. Let $\gr(a,b)$ be the Grassmannian of $a$-planes in $\Ff_2^b$. Section~\ref{sec:components} contains the proof of the following theorem.

\begin{theorem}\label{thm:intro_components}
  The irreducible (and connected) components of $\sgmbar$ are in natural bijection with tuples $(W,\xa)$ where $W \in \gr(m-k,m)$, $\xa \in \Ff_2^{k+{k-1 \choose 2}}$ is a $(k,g)$-syzygy relation, and $k=0,\dots,m-1$. If $g \ge k$ then every $\xa\in \Ff_2^{k+{k-1 \choose 2}}$ is a $(k,g)$-syzygy relation and if $g < k$ then Algorithm~\ref{alg:syzygy_relations} finds all $(k,g)$-syzygy relations.
\end{theorem}

\subsection{Outline of the paper}

In Section~\ref{sec:limit_roots} we expand the notion of a limit multiple-spin curve defined above and give rigorous definitions. These definitions, employing line bundles on quasi-stable curves, are convenient for solving geometric problems but not for solving the present problem of studying the structure of the relevant moduli spaces. Therefore, in Section~\ref{sec:torsion_free_sheaves} we give an equivalent definition using torsion-free sheaves on stable curves. In Section~\ref{sec:moduli_is_algebraic} we prove that the moduli problem is represented by an algebraic stack. In Section~\ref{sec:local_analysis} we undertake a study of the local structure of the moduli stacks. In Section~\ref{sec:fundamental_properties} we prove basic geometric properties of these moduli stacks. In Section~\ref{sec:components} we classify the components of $\sgm$. In Appendix~\ref{app:local_deformations} we establish some of the technical aspects required for the study of the local deformation functors of limit multiple-spin curves.

\subsection{Acknowledgments}

It is my pleasure to thank my adviser Gavril Farkas for generously sharing his insight into research as well as giving me financial and academic support during the course of my PhD. I would like to thank my co-adviser Gerard van der Geer for numerous discussions during my stay in Amsterdam. In addition, thanks to Lenny Taelman and David Holmes for providing helpful suggestions at key moments. Special thanks go to Fabio Tonini for helping me with stacks and to Klaus Altmann for helping me extend my scholarship. Finally, I thank Özde Bayer Sertöz for the help with the picture. This research constitutes a chapter in my PhD thesis. My PhD was funded by the Berlin Mathematical School and Graduiertenkolleg 1800 of the Deutsche Forschungsgemeinschaft. I am grateful to the referee for their careful reading and insightful comments. 

\section{Families of multiple limit roots}\label{sec:limit_roots}

In this section we generalize the construction of limit multiple spin curves given in the introduction to families of curves. As there is nothing special about square roots of the canonical bundle from the point of view of our construction, we will consider the square roots of any line bundle. 

\subsection{Destabilization of curves}

Let $k$ be an algebraically closed field with $\chr k \neq 2$. Let $C$ be a connected nodal curve over $k$. If the relative dualizing sheaf of $C$ is ample then $C$ is said to be \emph{stable}.  Let $\chi \in C$ be a subset  of the nodes of $C$ and $I \subset \co_C$ the ideal sheaf corresponding to $\chi$.  

\begin{definition}\label{def:blow-up}
  Let  $\pi\colon  X = \Proj_C (\sym^* I) \to C$. Then the map $\pi$, and sometimes $X$, is called a \emph{destabilization of $C$ at $\chi$}. If $C$ is stable then $X$ is \emph{quasi-stable}.
\end{definition}

\noindent  For each $x \in \chi$, the fiber $\pi\inv(x)$ is isomorphic to $\p_k$. The map $\pi$ is an isomorphism over $C \setminus \chi$ .

\begin{definition}
  Let $\pi\colon X \to C$ be a destabilization of the nodes $\chi \subset C$. Then for each $x \in \chi$ we will call the fiber $\pi\inv(x) \subset X$ an \emph{exceptional component} of $X$.
\end{definition}

\subsection{Families of limit roots}

We recall the notion of a limit root given in Definition~2.1.1 of~\cite{caporaso-casagrande-cornalba} but only for \emph{square} roots. All the definitions in this subsection are adapted from \emph{loc.\ cit.} Fix a line bundle $N$ on the curve $C$ of even degree. Consider a triplet $(\pi\colon  X \to C,L,\alpha\colon  L^{\otimes 2} \to \pi^*N)$ where $\pi$ is a destabilization and $L$ is a line bundle on $X$ of degree $(\deg N)/2$.

\begin{definition}\label{def:limit-root}
  Suppose $\alpha$ is an isomorphism in the complement of the exceptional components of $X$. If $L$ has degree~$1$ on each exceptional component then $(\pi,L,\alpha)$ is a \emph{limit root of $N$}. If $L$ has degree~$0$ or $1$ on the exceptional components then $(\pi,L,\alpha)$ \emph{stabilizes to a limit root of~$N$}.
\end{definition}

Let $T$ be a scheme over $\Zz[\frac{1}{2}]$, $\cc/T$ be a family of stable curves~\stacks{0E73} and $\cx/T$ a family of nodal curves~\cite[\href{http://stacks.math.columbia.edu/tag/0C58}{Tag 0C58}]{stacks-project}. Fix a line bundle $\cn$ on $\cc$ of relative even degree $d$.

\begin{definition}\label{def:family-of-blow-ups}
  If a morphism $\pi\colon \cx \to \cc$ restricts on each geometric fiber to a destabilization then $\pi$ is a \emph{destabilization}.
\end{definition}

\noindent Let $\pi\colon \cx \to \cc$ be a destabilization, $\cl$ a line bundle on $\cx$ and $\alpha\colon \cl^{\otimes 2}\to \pi^*\cn$ a morphism.

\begin{definition}
   If at each geometric fiber of $(\cx \overset{\pi}{\to} \cc,\cl,\alpha)$ is (or stabilizes to) a limit root then $(\cx \overset{\pi}{\to} \cc,\cl,\alpha)$ is (or stabilizes to) a \emph{family of limit roots}. 
\end{definition}

If $(\pi,\cl,\alpha)$ stabilizes to a family of limit roots then there exists a family of limit roots $(\pi'\colon \cx' \to \cc,\cl',\alpha')$ and a morphism $\rho\colon \cx \to \cx'$ such that $(\cl,\alpha)\simeq \rho^*(\cl',\alpha')$. The map $\rho$ is the \emph{partial stabilization map with respect to $\cl$}. A partial stabilization contracts the unstable components of each fiber on which $\cl$ has degree~$0$.

\begin{notation}\label{not:V}
  If $(\pi,\cl,\alpha)$ stabilizes to a family of limit roots then denote by $V(\cl) \subset \cx$ the largest open set on which the partial stabilization $\rho$ is an isomorphism. 
\end{notation}

An isomorphism between two families of limit roots $(\cx_i \overset{\pi_i}{\to} \cc,\cl_i,\alpha_i)_{i=1,2}$ is a pair of isomorphisms $(f\colon \cx_1 \to \cx_2,g\colon \cl_1 \to f^*\cl_2)$ such that $\alpha_1=f^*\alpha_2 \circ g^{\otimes 2}$.

\subsection{Families of multiple limit roots}\label{sec:families_of_multiple_limit_roots}

We now want to consider $m$-tuples of limit roots of $\cn$ over $\cc$ for a fixed positive integer $m$.

\begin{definition} \label{def:multiple-limit-root}
  Let $\pi\colon \cx \to \cc$ be a destabilization. Let $\xL\colonequals \set{\cl_i,\alpha_i\colon \cl_i^{\otimes 2} \to \pi^*\cn}_{i=1}^m$ be such that each $(\cl_i,\alpha_i)$ stabilizes to a limit root, but $\xL$ itself is not pulled back from a partial stabilization. Consider a line bundle $\cl$ and a sequence of morphisms $\varphi_i\colon \cl \to \cl_i^{\otimes 2}$ satisfying the following:
  \begin{itemize}
    \item $\alpha_i \circ \varphi_i = \alpha_j \circ \varphi_j$ for each $i,j$.
    \item Each $\varphi_i$ restricts to an isomorphism on $V(\cl_i)$, see Notation~\ref{not:V}.
  \end{itemize}
  Then, we will call $\xF=(\varphi_i)_{i=1}^2$ a \emph{synchronization data}. The tuple $(\pi,\xL,\xF)$ will be called a \emph{multiple limit root}. An isomorphism of multiple limit roots is a sequence of isomorphism of the limit roots commuting with the synchronization data.
\end{definition}

\begin{remark}
  The definition above agrees with the construction given in the introduction. Indeed, on $V(\cl_i) \cap V(\cl_j)$ the map $\varphi_j \circ \varphi_i\inv$ identifies $\cl_i^{\otimes 2}$ and $\cl_j ^{\otimes 2}$.
\end{remark}

We will give an alternative formulation of multiple limit roots in the next section and prove the equivalence of these two notions in Proposition~\ref{prop:equivalence}. To make this equivalence precise, we make the following definition.

\begin{definition}\label{def:cat_limit}
  For $(C \to T, \cn)$ and $m$ fixed, let $\csnmlim$ denote the \emph{category of limit multiple roots of $\cn$}. That is, $\csnmlim \to T$ is fibered in groupoids with fiber over $T' \to T$ consisting of the multiple limit roots of $(C|_{T'} \to T', \cn|_{T'})$.
\end{definition}

\subsection{Multiple-spin curves}\label{sec:multiple_spin}

Take $N$ to be the canonical bundle. In this case, limit roots are called limit spin curves, therefore we will refer to multiple limit roots as a multiple-spin curves. Taking $m=2$ we will give some examples of multiple-spin curves. For basic results on limit spin curves we refer to~\cite{cornalba}. We will say that a spin curve $\xi=(\pi\colon \tilde C \to C,L,\alpha)$ over $C$ \emph{requires destabilization} if $\pi$ is not an isomorphism.

The examples above can be readily computed from definitions. Especially the Appendix~\ref{app:local_deformations} and Section~\ref{sec:local_analysis} are helpful in constructing more elaborate examples. A form of dual graph is introduced in Section~\ref{sec:aut_groups} which gives the general framework to tackle examples like the ones below. For a detailed study of the examples below and additional examples we refer to~\cite[\S~II.1.6.2]{me-thesis}.

\begin{example}\label{ex:red}
  For $i=1,2$ let $(C_i,p_i)$ be marked smooth irreducible curves. Consider the nodal curve $C=C_1\cup_{p_1 \sim p_2 } C_2$. Every spin curve over $C$ requires destabilization. Given any pair of spin curves $\xi_i=(\pi_i,L_i,\alpha_i)$, $i=1,2$, we can \emph{uniquely} form a limit multiple-spin curve over $(\xi_1,\xi_2)$ upto isomorphisms.
\end{example}

\begin{example}\label{ex:irred}
  Let $C = \overline{C}/(p\sim q)$ where $p,q \in \overline{C}$ are distinct points on a smooth irreducible curve $\overline{C}$. Some spin curves over $C$ requires destabilization and some do not. If we take a pair of spin curves $\xi_i=(\pi_i,L_i,\alpha_i)$, $i=1,2$, over $C$ then there is a unique way to form a multiple spin curve, upto isomorphisms, \emph{unless} both spin curves require destabilization. In the latter case, from each pair $(\xi_1,\xi_2)$ we can form two distinct isomorphism classes of multiple-spin curves. 
\end{example}

There are three kinds of interactions that can happen over each node between two limit spin structures: both, one or neither may require destabilization. The following demonstrates all three on one curve. 

\begin{example}\label{ex:three_nodes}
  Consider a smooth irreducible curve $\overline{C}$ with six distinct points $p_{ij} \in \overline{C}$ for $i=1,2,3$ and $j=1,2$. Form the curve $C=\overline{C}/(p_{i1} \sim p_{i2})_{i=1}^{3}$ and let $x_i$ stand for the node $[p_{i1}]$. We will take limit roots $\xi_1$ and $\xi_2$ over $C$ such that $\xi_1$ requires only the node $x_1$ to be blown-up while $\xi_2$ requires $x_1$ and $x_2$ to be blown-up. We can check that there are two isomorphism classes of limit multiple-spin curves over $(\xi_1,\xi_2)$.
\end{example}

\section{Torsion-free sheaves}\label{sec:torsion_free_sheaves}

The definition given in Section~\ref{sec:limit_roots} is what is intended to be used in geometric applications. However, for the problem of constructing the relevant moduli spaces and studying the local deformation spaces, we gain an advantage by working with stable curves instead of quasi-stable curves. We will now give our ``working definition'' for limit multiple roots using torsion-free sheaves.

\subsection{Torsion-free roots}  Let $\cm$ be an algebraic stack. The definitions in this subsection are from~\cite{jarvis-torsion-free}.

\begin{definition}[Jarvis]
  A \emph{torsion-free sheaf on a stable curve $\cc \to \cm$} is a coherent $\co_{\cc}$-module $\ce$ which is flat and of finite presentation over $\cm$ such that over each $s \in \cm$ the fiber $\ce|_{\cc_s}$ has no associated primes of height one. 
\end{definition}

\noindent Note that the smooth locus of the map $\cc\to\cm$ is contained in the locus where $\ce$ is locally free. 

\begin{definition}[Deligne, Jarvis]\label{def:del-jar}
  Let $\ce$ be a rank-1 torsion-free sheaf on a curve $\cc \to \cm$ and $\cn$ a line bundle on $\cc$. Let $\delta \colon  \ce \iso \cn \otimes \ce^\vee$ be an isomorphism. Then the pair $(\ce,\delta)$ will be called a \emph{(square) root of $\cn$}.
\end{definition}

\begin{definition}\label{def:bilinear}
  Given a coherent module $\ce$ and a line bundle $\cn$ on a scheme $\cx$, a homomorphism $b \colon  \ce^{\otimes 2} \to \cn$ will be called a \emph{bilinear form}. A bilinear form induces two maps $b^l,b^r \colon  \ce \to \ce^\vee \otimes \cn$ where $\ce^\vee = \hom(\ce,\co_{\cx})$, $b^r(e)=b(e,\_)$ and $b^l(e) = b(\_,e)$. 
 If both $b^r$ and $b^l$ are isomorphisms then $b$ is \emph{non-degenerate}. If $b^r=b^l$ then $b$ is \emph{symmetric}, and $b$ factors through the symmetrizing map $\ce^{\otimes 2} \to \sym^2 \ce$. 
\end{definition}

We will adopt an unusual notational custom and for any $A$-module $E$ write the $d$-th symmetric product $\sym^d_A(E)$ simply as $E^d$, and given $\mu \colon E \to F$ we will denote by $\mu^d$ the induced map $E^d \to F^d$. The same goes for sheaves of modules and morphisms between them. In compensation, we will write out tensor powers and direct sums explicitly as $E^{\otimes d}$ and $E^{\oplus d}$, respectively. We will use the following reformulation of Definition~\ref{def:del-jar}.

\begin{definition}\label{def:square-root}
  Let $\ce$ be a rank-1 torsion-free sheaf on a curve $\cc \to \cm$ and $\cn$ a line bundle on $\cc$. Let $b \colon  \ce^{2} \to \cn$ be a non-degenerate symmetric form. Then the pair $(\ce,b)$ will be called a \emph{(torsion-free) root of $\cn$ on $\cc/\cm$}.
\end{definition}

  An isomorphism $\mu \colon (\ce,b) \to (\ce',b')$ of roots is defined to be an isomorphism of the underlying sheaf of modules $\mu \colon \ce \iso \ce'$ such that $b=b' \circ \mu^{2}$.

\subsection{Relation to limit roots}\label{sec:torsion_to_limit_roots}

Suppose $(\ce,b)$ is a torsion-free root of $\cn$ on $\cc/\cm$. Define $\Pp(\ce)\colonequals \Proj_\cc(\sym^* \ce)$ with $\pi\colon \Pp(\ce)\to \cc$ the structure map and $\cl = \co_{\Pp(\ce)}(1)$ the line bundle corresponding to the $\Proj$ construction. Notice that $\pi$ is a destabilization. 

There are natural surjective maps $\pi^* \ce^d \to \cl^{\otimes d} = \co_{\Pp(\ce)}(d)$ for each $d \ge 0$ and there is the map $\pi^*b \colon \pi^*\ce^2 \to \pi^* \cn$. As is shown in \S 3.1.3 of~\cite{jarvis-torsion-free} there is a natural map $\alpha$ making the following diagram commute:
\[
  \begin{tikzcd}
    \pi^*\ce^2 \arrow[rd,"\pi^*b"]\arrow[d] & \\
    \cl^2 \arrow[r,dotted,"\alpha"]         & \pi^* \cn
  \end{tikzcd}.
\]

\begin{proposition}\label{prop:pull}
  Let $(\ce,b)$ be a root of $\cn$. Then $(\Pp(\ce) \overset{\pi}{\to} \cc,\co_{\Pp(\ce)}(1),\alpha)$, constructed above, is a family of limit roots of $\cn$.
\end{proposition}
\begin{proof}
  Both $\Proj$ and $\sym$ constructions behave well with respect to base change. So we may reduce to $T=\spec k$ where $k$ is an algebraically closed field.  Let $L=\co(1)$ and note $\pi_* L \simeq \ce$, see Lemma 3.1.4.(2)~\cite{jarvis-torsion-free}. To see that $L$ has degree one over any exceptional fiber $E$ over a node $x$ we simply observe $h^0(E,L|_E) = \dim_{k} \ce|_x = 2$. Since $E \simeq \p_k$ we are done.  The map $\alpha$ is an isomorphism away from the exceptional divisors because $b$ is an isomorphism away from the corresponding nodes. The degrees of $L$ and $\ce$ agree because $\pi_* L \simeq \ce$. This completes the proof.
\end{proof}

Conversely, given a family of limit roots $(\pi \colon\cx \to \cc,\cl,\alpha)$, let $\ce\colonequals  \pi_*\cl$. Then, using Lemma~3.1.4.(2)~\cite{jarvis-torsion-free} again, we have $\pi_* \cl^2 \simeq \ce^2$. Using the adjunction map $a \colon \pi_*\pi^* \cn \to \cn$ we may define $b\colonequals a \circ \pi_* \alpha \colon  \ce^2 \to \pi_*\pi^* \cn \to \cn$.  

\begin{proposition}\label{prop:push}
  The tuple $(\ce,b)$ obtained in this way is a torsion-free root of $\cn$.
\end{proposition}
\begin{proof}
  This is similar to the proposition above. The main ingredients are Proposition 3.1.2.(3) and Proposition 3.1.5 of~\cite{jarvis-torsion-free} which says that $\pi_* \cl$ is torsion-free and $b$ is of the right form respectively.
\end{proof}

As in Definition~\ref{def:cat_limit} we may define the category of torsion-free roots. The results of this section imply the following.

\begin{corollary}\label{cor:categories_are_equivalent}
  The category of torsion-free roots of $\cn$ is equivalent to the category of limit roots of $\cn$.
\end{corollary}

\subsection{Multiple torsion-free roots}

Let $(\ce_i,b_i)_{i=1}^m$ be a sequence of torsion-free roots of $\cn$ on a family of curves $\cc \to \cm$. Now we will phrase the notion of multiple limit roots from Section~\ref{sec:families_of_multiple_limit_roots} in terms of torsion-free roots using the equivalence identified in Section~\ref{sec:torsion_to_limit_roots}. 

For each root we have the destabilization $\cx_i \colonequals \Proj_\cc(\sym^* \ce_i) \to \cc$, and the goal is to find a common destabilization $\cx \to \cc$. The roots are partially identified with one another after squaring them and $\cx_i$ can also be constructed by taking even powers in the symmetric algebra, i.e., $\cx_i \simeq \Proj_\cc(\sym^{2*} \ce_i)$. Our goal is then to isolate the conditions for which the even symmetric algebras are identified wherever possible.

\begin{notation}
  Let $V_i \subset \cc$ be the open locus (Lemma~\ref{lem:V_is_open}) where the rank of $\ce_i$ is maximal amongst all $\ce_j$. This is the analogue of Notation~\ref{not:V}.
\end{notation}

\begin{definition}\label{def:pre-sync}
  Let $\cf$ be a sheaf of modules on $\cc$ and let $(\varphi_i \colon \cf \to \ce_i^{2})_{i=1}^m$ be a sequence of maps such that: (1) for all $i,j$ we have $b_i\circ\varphi_i = b_j\circ\varphi_j$, (2) for all $i$ the map $\varphi_i|_{V_i}$ is an isomorphism.
  Then, we will call $(\varphi_i)_{i=1}^m$ a \emph{pre-sync data} for the sequence of roots $(\ce_i,b_i)_{i=1}^m$. The two conditions above are \emph{pre-sync conditions}.
\end{definition}

Suppose $(\varphi_i)_{i=1}^m$ is a pre-sync data for $(\ce_i,b_i)_{i=1}^m$. On $V_{ij}\colonequals  V_i \cap V_j$ we can define $\psi_{ij} = \varphi_j|_{V_{ij}} \circ \varphi_i|_{V_{ij}}\inv \colon \ce_i^2|_{V_{ij}} \iso \ce_j^2|_{V_{ij}}$. Using $\psi_{ij}$, we get a surjective map 
\begin{equation}
  \sym^* \sym^2 \ce_i|_{V_{ij}} \to \sym^{2*} \ce_j|_{V_{ij}}.
  \label{eq:sym2_descent}
\end{equation}

\begin{lemma}\label{lem:sym2_descent}
  The map (\ref{eq:sym2_descent}) factors through an isomorphism $\sym^{2*} \ce_i|_{V_{ij}} \to \sym^{2*} \ce_j|_{V_{ij}}$ if and only if $\sym^2 \sym^2 \ce_i|_{V_{ij}} \to \sym^4 \ce_j|_{V_{ij}}$ factors through $\sym^4 \ce_i|_{V_{ij}} \to \sym^4 \ce_j|_{V_{ij}}$.
\end{lemma}
\begin{proof}
  The kernel of $\sym^* \sym^2 \ce_i \to \sym^{2*} \ce_i$ is generated by the kernel of $\sym^2 \sym^2 \ce_i \to \sym^4 \ce_i$.
\end{proof}

\begin{definition}\label{def:sync}
  If the map $\sym^2 \sym^2 \ce_i|_{V_{ij}} \to \sym^4 \ce_j|_{V_{ij}}$ factors through $\sym^4 \ce_i|_{V_{ij}} \to \sym^4 \ce_j|_{V_{ij}}$ then we will call $(\varphi_i)_{i=1}^m$ a \emph{sync data} for the sequence of roots $(\ce_i,b_i)_{i=1}^m$. This condition will be called the \emph{sync condition}. 
\end{definition}

\begin{definition}\label{def:multiple-root}
  The tuple $(\ce_i,b_i,\varphi_i)_{i=1}^m$ will be called a \emph{multiple-root of $\cn$} if $(\varphi_i)_{i=1}^m$ satisfy the sync condition.
\end{definition}

An isomorphism between a pair of multiple-roots is a sequence of isomorphisms between the underlying roots compatible with the sync data.

\begin{definition}\label{def:cat_torsion}
  For $(\cc \to \cm, \cn)$ and $m$ fixed, let $\csnmbar$ denote the \emph{category of multiple-roots of $\cn$}. That is, $\csnmbar \to \cm$ is fibered in groupoids with fiber over a scheme $T/\cm$ consisting of the multiple-roots of $(\cc|_{T} \to T, \cn|_{T})$. The sub-category $\csnm \toi \csnmbar$ is defined by taking the objects where all the roots are line bundles. 
\end{definition}

\begin{proposition}\label{prop:equivalence}
  The categories $\csnmlim$ and $\csnmbar$ are equivalent.
\end{proposition}
\begin{proof} In light of Proposition~\ref{prop:push} the pushforward of a multiple limit root is clearly a multiple-root with the synchronization data of the former mapping down to the sync data of the latter.

  Conversely, let $(\ce_i,b_i,\varphi_i)_{i=1}^m$ be a multiple-root of $\cn$ on $\cc$. As in Proposition~\ref{prop:pull} let $\cx_i \to \cc$ be the destabilization obtained by the symmetric algebra of $\ce_i$ and let $(\cl_i,\alpha_i)$ be the limit root on $\cx_i$.

  Let $D_i = \sym^{2*} \ce_i$ and construct $D$ by gluing $D_i$ on the charts $V_i$ using the sync data $(\varphi_i)_{i=1}^m$, this construction equips $D$ with a map to $\sym^{2*} \cn$. Since $D_i$ are isomorphic to $\sym^{2*} \cn$ on an open neighbourhood containing the complement of $V_i$, we can construct maps $D \to D_i$.

  Let $\cx \to \cc$ be obtained as $\Proj_\cc(D)$ with $\cl\colonequals  \co_\cx(1)$. Since $\cx_i$ can also be constructed as $\Proj_\cc(D_i)$, the maps $D \to D_i$ induce partial contraction maps $\rho_i\colon \cx \to \cx_i$ and maps $\cl \to \rho_i^*\cl_i^{\otimes 2}$.  It is now clear that the tuple $(\cx/\cc,\rho_i^*\cl_i,\rho_i^*\alpha_i,\cl \to \rho_i^*\cl_i^{\otimes 2})$ is a multiple limit root.

  These two constructions are inverses to one another and they are functorial with respect to pullback because pushforward and $\Proj$ are functorial.
\end{proof}

\begin{remark}
  Examples of multiple roots, parallel to the examples given in~Section~\ref{sec:multiple_spin} can be found in~\cite[\S~1.6.1]{me-thesis}.
\end{remark}

\subsection{Bounded degree} For technical reasons we need to introduce a boundedness condition on the degree of the line bundles. For the rest of this article we assume our line bundles have absolutely bounded degree in the following sense.

\begin{definition}\label{def:bounded-degree}
  If there exists a constant $c \in \Zz$ such that on any component $Y$ of any geometric fiber of $\cc \to \cm$ we have $\deg \cn|_Y \ge c$ then $\cn$ will be said to have \emph{absolutely bounded degree}.
\end{definition}

This boundedness condition is weak enough that unless $\cm$ has infinitely many disconnected components, the condition is automatically satisfied. Even without this condition, the line bundle $\omega_{\cc/\cm}^{\otimes l}$ for any $l \in \Zz$ has absolutely bounded degree (Sublemma 4.1.10 \cite{jarvis-torsion-free} for $l=1$, the idea readily generalizes to all $l \in \Zz$).

\section{The moduli space of multiple-roots is algebraic}\label{sec:moduli_is_algebraic}

Let $m$ be a positive integer $m$. In this section we will show that the moduli space $\csnmbar$ (Definition~\ref{def:cat_torsion}) is an algebraic stack and is locally of finite type over the base $\cm$. Note that we assume $\cn$ has absolutely bounded degree (Definition~\ref{def:bounded-degree}). 

\subsection{The moduli space of single roots}\label{sec:single_roots}

When $m=1$ we call $\csnmbar$ the moduli space of single roots and denote it by $\csnbar$. The basic properties of $\csnbar$ follow directly from the work of Jarvis~\cite{jarvis-torsion-free}. We will list these properties, briefly highlighting the differences in proofs.

\begin{theorem}\label{thm:m_is_1}
  The category $\csnbar$ is an algebraic stack. Moreover, the morphism $\pi \colon\csnmm{} \to \cm$ is proper, of finite type and quasi-finite. The diagonal of this morphism is finite and unramified. If $\cm$ is a Deligne--Mumford stack then so is $\csnmm{}$.
\end{theorem}

\begin{proof}
  When $\cn = \omega_{\cc/\cm}$ these results from those of~\cite{jarvis-torsion-free}. However, those proofs apply with little modification to the present case. The only condition required is that of absolutely bounded degree (Definition~\ref{def:bounded-degree}) which we assume. The last statement follows from the condition on the diagonal, see also~\stacks{04YV}. 
\end{proof}

\subsection{The moduli space of multiple-roots}\label{sec:multiple_moduli}

\begin{lemma}\label{lem:forget}
  For each proper subset $J \subsetneq \{1,\dots,m\}$ there is a natural forgetful functor $\csnmbar \to \csnmmbar{\# J}$ obtained by forgetting all the roots except for those at position $i \in J$ and adjusting the sync data appropriately.
\end{lemma}
\begin{proof}
  Let $(C/T,(\ce_i,b_i,\varphi_i \colon \cf \to \ce_i^2)_{i=1}^m)$ be a multiple root. After forgetting the roots in the complement of $J$, use the partial isomorphisms between the remaining roots induced by $\varphi_i$'s to glue together a new sheaf $\cf'$ and new maps $\varphi_i' \colon \cf' \to \ce_i^2$ for $i \in J$.
\end{proof}
\begin{remark}\label{rem:forget}
  It is clear from the proof that there is a natural map $\tau\colon\cf \to \cf'$ commuting with $\varphi_i$'s and $\varphi_i'$'s.
\end{remark}

\begin{theorem}\label{thm:algebraic}
  The moduli space $\csnmbar$ is an algebraic stack, locally of finite type over $\cm$.
\end{theorem}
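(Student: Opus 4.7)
The plan is to prove the theorem by induction on $m$, with base case $m=1$ being Proposition \ref{prop:single-roots-algebraic} (together with the observation that $\csnmmbar{} \to \cm$ is locally of finite type, e.g.\ by Proposition \ref{prop:m1-finite-type}, and $\cm \to S$ is locally of finite type by assumption).

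For the inductive step, suppose the theorem is known for $(m-1)$-tuples. The key tool is the forgetful morphism
\[
F\colon \csnmbar \to \csnmmbar{m-1} \times_\cm \csnmmbar{}
\]
from Remark \ref{rem:forgetful}, which remembers both the synchronization of the first $m-1$ roots and the $m$-th root individually, but discards the information of how the $m$-th root is synchronized with the rest. The target of $F$ is a $2$-fiber product of algebraic stacks (the first factor algebraic by induction, the second by Proposition \ref{prop:single-roots-algebraic}), both locally of finite type over $\cm$, hence the target is algebraic and locally of finite type over $S$. So it is enough to prove that $F$ itself is representable by algebraic spaces and locally of finite type; composition then yields the claim for $\csnmbar$.

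To handle $F$, fix a scheme $T$ with a morphism to the target, giving data $(C/T,\cn,\xR',\cR_m)$, with $\xR' = (\cR',\Phi')$, $\Phi'=(\varphi'_i\colon \cf' \to \ce_i^{2})_{i<m}$, and $\cR_m = (\ce_m,b_m)$. The fiber of $F$ over $T$ classifies sync data $(\cf,\tau_1,\tau_2)$ completing $(\xR',\cR_m)$ to a synced $m$-tuple. The strategy is to show that $\cf$ is essentially rigid, so that the fiber is described by morphisms only. Concretely, the pre-sync equation $b_{\Phi'}\circ\tau_1 = b_m\circ\tau_2$ is a \emph{closed} condition factoring any candidate $\cf$ through $\cf^{\mathrm{u}}:=\cf'\times_{\cn}\ce_m^{2}$; the iso-on-$V_i$ conditions are \emph{open}, and since the $V_i$ cover $C$ (at each point some $\ce_i$ attains maximal rank) they force the map $\cf \to \cf^{\mathrm{u}}$ to be an isomorphism whenever it produces a valid pre-sync datum. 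Thus up to unique isomorphism the fiber amounts to checking whether the canonical candidate satisfies these openness conditions and the sync condition, the latter reducing by Lemma \ref{lem:simpler-sync-condition} to whether $\sym^2\sym^2 \ce_i \to \sym^4 \ce_j$ factors through $\sym^4\ce_i \to \sym^4\ce_j$ on $V_{ij}$, again a locally closed condition. These cut out a locally closed subscheme of $T$, yielding representability of $F$; local finite type follows from the fact that everything is cut from Hom/Isom functors between coherent sheaves on a proper $T$-scheme, which are representable and locally of finite type.

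The hard part will be making precise the claim that $\cf^{\mathrm{u}} = \cf'\times_\cn\ce_m^{2}$ is the correct universal candidate, and that the locus in $T$ over which it defines a genuine sync completion is locally closed. The rank of $\ce_i$ can jump from $1$ to $2$ along components of the discriminant (Lemma \ref{lem:constant-rank}), and when two roots have different singularity patterns the sheaf $\cf^{\mathrm{u}}$ behaves irregularly at such nodes; one must check directly using the local classification of Section \ref{sec:multiple-roots} (in particular the explicit form $E(p,p)$ of standard roots and the rigidity Remark \ref{rem:redundant-compatibility}) that the fiber-product construction produces exactly the correct sync candidates and that the resulting locus of valid completions in $T$ is genuinely locally closed. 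The alternative, if this direct analysis becomes unwieldy, is to fall back on a Hilbert/Quot-scheme argument: the pairs of surjections from a fixed coherent sheaf on $C$ to the $\ce_i^{2}$ satisfying the pre-sync equation live in a representable Quot-type scheme, and the sync and isomorphism conditions cut out locally closed subschemes there; this gives representability of $F$ even if the rigidity argument requires more care than outlined above.
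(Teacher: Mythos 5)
The overall structure (induction on $m$, the forgetful morphism to $\csnmmbar{m-1}\times_\cm\csnmmbar{}$, the open/closed dichotomy for pre-sync and sync conditions) matches the paper, but the central rigidity claim does not hold, and this is precisely where the paper invokes a more substantial tool.

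The claim that the pre-sync equation forces $\cf \to \cf^{\mathrm{u}} := \cf'\times_{\cn}\ce_m^{2}$ to be an isomorphism is false. At a node $z$ of a geometric fiber where both $\cf'$ and $\ce_m^{2}$ have rank $3$, the maps $b_{\Phi'}$ and $b_m$ to $\cn$ both vanish at $z$: in the local model $s=(x,p,y)$ with $p\in\xm_R$, all of $x,y,p$ die at the closed point. Hence the fiber of $\cf^{\mathrm{u}}$ at $z$ has dimension $3+3=6$, whereas $\cf$ must have dimension $3$ there, so $\cf^{\mathrm{u}}$ is far too large to serve as a universal candidate. A second, independent obstruction is that $\cf$ is genuinely not unique even when it exists: Lemma \ref{lem:iso-class} produces several inequivalent synchronizations at each shared node, which is exactly why the forgetful map $\csnmbar \to \csnmmbar{m-1}\times_\cm\csnmmbar{}$ has nontrivial finite fibers (cf.\ the proof of Lemma \ref{lem:quasi-finite}). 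Your rigidity argument would collapse those fibers to a point. The Quot-scheme fallback does not repair this: $\cf$ is not a fixed sheaf with varying quotients, its isomorphism class itself moves in families, and the maps $\tau_i$ are not surjections (they vanish at the exceptional loci).

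What the paper does instead is build an \emph{ambient} algebraic stack with no rigidity assumption. The moduli of $B'$-flat, finitely presented, $B'$-properly supported coherent sheaves $\cf$ on $C_{B'}$ is an algebraic stack, locally of finite type, by \cite{hall-openness-of-versality}; adjoining the pair $(\tau_1,\tau_2)$ subject to $b'\circ\tau_1 = b_m\circ\tau_2$ is representable and locally of finite type over it because all the modules involved are finitely presented; and then the pre-sync and sync conditions carve out a locally closed substack via Propositions \ref{prop:a1-is-open} and \ref{prop:a2-is-closed}. Without Hall's theorem (or an equivalent algebraicity input for the stack of coherent sheaves) you have no ambient algebraic object in which to cut out the open and closed conditions, and no rigidity argument can replace it.
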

\begin{proof}
  The proof is by induction on $m$, with the case $m=1$ taken care of in Section~\ref{sec:single_roots}. Let us write $\cy = \csnmmbar{m-1}\times_{\cm} \csnmmbar{}$ which is an algebraic stack, locally of finite type over $\cm$ by induction hypothesis. Use the forgetful maps of Lemma~\ref{lem:forget} corresponding to the subsets $J =\{1,\dots,m-1\}$ and $J=\{m\}$ respectively to obtain a map $\csnmbar \to \cy$.

  Let $\ca' \to \cy$ be the category consisting of tuples $(C/T,\cf)$ where $\cf$ is a quasi-coherent sheaf on $C$, finitely presented and $T$-flat with $T$-proper support. Using~\cite{hall-openness-of-versality} we conclude that $\ca'$ is an algebraic stack, locally of finite type over $\cy$.

  A morphism $T \to \ca'$ corresponds to a tuple $(C/T, (\ce_i,b_i,\varphi_i \colon \cg \to \ce_i^2)_{i=1}^{m-1},(\ce_m,b_m),\cf)$. Consider the category $\ca$ over $\ca'$ of tuples of the form $(T\to \ca',\tau_1,\tau_2)$ where $\tau_1 \colon \cf \to \cg$ and $\tau_2\colon \cf \to \ce_m^2$ are morphisms such that $b_i\circ \varphi_i \circ \tau_1 = b_m \circ \tau_2$ for each $i=1,\dots,m-1$. Since all relevant modules are finitely presented, $\ca$ is an algebraic stack. Let us define $\varphi_i' \colonequals  \varphi_i \circ \tau_1$ for $i = 1,\dots,m-1$ and $\varphi_m' \colonequals  \tau_2$. We can define the subcategory $\ca_1$ of $\ca$ where $(\varphi_i')_{i=1}^m$ satisfies the pre-sync condition (Definition~\ref{def:pre-sync}) and the subcategory $\ca_2$ of $\ca_1$ where $(\varphi_i')_{i=1}^m$ satisfies---in addition---the sync condition (Definition~\ref{def:sync}).

  In light of Remark~\ref{rem:forget} the forgetful map $\csnmbar \to \cy$ factors through a map $\csnmbar \to \ca$ and identifies $\csnmbar$ with $\ca_2$. To conclude the proof, we will show that $\ca_1 \toi \ca$ is an open immersion and $\ca_2 \toi \ca_1$ is a closed immersion. The first of these statements is proven in Proposition~\ref{prop:open_immersion}. The second of these statements is a direct application of~\cite[Corollaire 7.7.8]{EGAIII-2}.
\end{proof}

\subsubsection{Pre-sync condition is open} \label{sec:pre-sync_is_open}

We use the setting in the proof of Theorem~\ref{thm:algebraic}. Let us recall that a morphism $T \to \ca$ corresponds to a tuple $(C/T, (\ce_i,b_i,\varphi_i \colon \cg \to \ce_i^2)_{i=1}^{m-1},(\ce_m,b_m),\cf, (\tau_1,\tau_2))$. We will prove that the subcategory $\ca_1 \subset \ca$ where $(\tau_1,\tau_2)$ induces a pre-sync condition is open in the algebraic stack $\ca$.

\begin{notation}
  For any $T \to \ca$ we define the loci $U_i(T), V_i(T) \subset C_T$ for $i=1,2$ in the following manner: Let $U_i(T)$ be the locus where $\tau_i$ is an isomorphism. Let $V_1(T)$ be the complement of the locus where $\cf'$ is free but $\ce_m$ is not. Let $V_2(T)$ be the complement of the locus where $\ce_m$ is free but $\cf'$ is not.
\end{notation}

  Whenever we are working locally on $C$, we may assume $m=2$ since $\cf'$ is locally isomorphic to one of $\ce_i^2$. When $m=2$ then $\cf'= \ce_1^2$. With this remark in mind we will assume $m=2$.  In this case, we simply have a pair of maps $(\varphi_i \colon\cf \to \ce_i^2)_{i=1}^2$ satisfying $b_1 \circ \varphi_1 = b_2 \circ \varphi_2$ and that our goal is to show the second pre-sync condition defines an open locus on the base $T$. Note that the second pre-sync condition is equivalent to having $U_i(T) = V_i(T)$.

  \begin{lemma}\label{lem:V_is_open}
  The loci $U_i$ and $V_i$ are open and respect base change. Precisely, for any $S \to T \to \ca$ we have $V_i(S) = V_i(T)|_S$ and $U_i(S) = U_i(T)|_S$.
\end{lemma}
\begin{proof}
  The complement of $V_i$ is the locus of points for which $\ce_i$ is free but $\ce_j$ is not ($j \neq i$). This locus is supported on the discriminant locus.  We show in Lemma~\ref{lem:constant-rank} that the rank of a root is constant on each connected component of the discriminant locus. Thus $V_i^c$ is a union of components of the discriminant locus, which is closed.  Moreover, the condition of being locally free or not behaves well with respect to base change. Therefore it is clear that $V_i(S)=V_i(T)|_S$.

  The fact that the $U_i$'s respect base change is a consequence of the following general fact. Let $\psi \colon F \to E$ be a map of finitely presented modules on $C_T$. Then the set where $\psi$ is an isomorphism is the intersection $\set{\ker \psi = 0} \cap \set{\coker \psi = 0}$. When $E$ is flat over $T$ then for any $T' \to T$ we have $\set{(\ker \psi)|_{T'} = 0} \cap \set{(\coker \psi)|_{T'} = 0} = \set{\ker (\psi|_{T'}) = 0} \cap \set{\coker (\psi|_{T'}) = 0}$. The zero locus of a finitely generated module is open and respects base change. 
\end{proof}

\begin{lemma}\label{lem:constant-rank}
  Let $C\to T$ be a stable curve and $\ce$ a locally self-dual rank-1 torsion-free module on $C$. Then the rank of $\ce$ is constant on each component of the discriminant locus $Z \subset C$. 
\end{lemma}
\begin{proof}
  Pick any point $\xp\in Z$ and note $\rk \ce|_\xp$ is either $1$ or $2$. By semi-continuity, the locus where the rank of $\ce$ is $1$ is open in $C$ and therefore on $Z$. It remains to show that the locus in $Z$ where the rank of $\ce$ is $2$ is also open.

  By making an \'etale base change and passing to an \'etale neighbourhood $U$ of $\xp$, we may assume that $T=\spec R$ and $U=\spec A$ where $\exists x,y \in A$, $\pi=xy \in R$ such that $Z|_U$ is defined by the ideal $(x,y)$. With $U$ chosen appropriately, we may apply Faltings' classification~\cite{faltings-bundles} and conclude that $E=\ce|_U$ is either free or of the form $E(p,p)$ with $p^2=\pi$ (see the notation in \emph{loc.\ cit.}\ or Appendix~\ref{app:local_deformations}). The discriminant locus $Z|_U$ is isomorphic to $\spec R/(\pi)$ and $E(p,p)$ is free at a point iff $p$ is invertible there. This is impossible on $Z|_U$ since $p^2 = 0 \imod \pi$. 
\end{proof}

\begin{lemma}\label{lem:one-direction}
  If $V_1 \subset U_1$ then $U_2 \subset V_2$. Similarly with the indices swapped. 
\end{lemma}
\begin{proof}
  Assuming $V_1 \subset U_1$ we have $V_2^c \subset V_1 \subset U_1$ by definitions. Therefore, if $\exists x \in V_2^c \cap U_2$ then $x \in U_1 \cap U_2$. But this is a contradiction, if $\varphi_1$ and $\varphi_2$ are isomorphisms at $x$ then $\ce_i$'s are isomorphic at $x$. On the other hand $x \in V_2^c$ implies that the roots have different ranks at $x$.
\end{proof}

\noindent The following proposition proves that $\ca_1 \to \ca$ is an open immersion.

\begin{proposition}\label{prop:open_immersion}
  Take a map $\spec R \to \ca$. Let $\xp \in \spec R$ be a point such that $U_i(k)=V_i(k)$ for $i=1,2$ where $k$ is the residue field of $\xp$. Then there exists a Zariski open neighbourhood $W$ of $\xp$ such that $U_i(W)=V_i(W)$ for $i=1,2$. 
  \label{prop:a1-is-open}
\end{proposition}
\begin{proof}
  Let $j \colon C|_\xp \toi C$ be the inclusion of the fiber over $\xp$. The fact that $U_i(k) = U_i(R)|_{\spec k}$ implies that $U_i(R)$ is an open neighbourhood of of $j(U_i(k))$, similarly for $V_i$'s. We know $V_i(k)$'s cover the fiber $C|_\xp$ and, by hypothesis, $U_i(k)=V_i(k)$. Therefore the open sets $U_i(R)\cap V_i(R)$ for $i=1,2$ cover the fiber $C|_\xp$.

  Pick a Zariski neighbourhood $W \subset \spec R$ of $\xp$ such that the preimage of $W$ is covered by $U_i(R)\cap V_i(R)$. Shrink $W$ so that every component of the discriminant locus intersects the fiber over $\xp$. Let $Z$ be the components of the discriminant locus on which the $\ce_i$'s are both non-free. On $Z|_\xp$ the $\varphi_i$'s are isomorphisms, hence they will remain an isomorphism in a neighbourhood of $Z \cap C_{\xp} \subset C$. Shrink $W$ one last time so that the $\varphi_i$'s are isomorphisms on all of $Z$.

  We claim that $U_i(W)=V_i(W)$. By Lemma~\ref{lem:one-direction} it will be sufficient to show $V_i(W) \subset U_i(W)$ for $i=1,2$. Pick $x \in V_1(W)$ and suppose for a contradiction that $x \notin U_1(W)\cap V_1(W)$. Then $x$ must lie in $U_2(W)\cap V_2(W)$ which implies that either both $\ce_i$'s are free or both $\ce_i$ are non-free at $x$. Furthermore, $x \in U_2(W)$ implies $\varphi_2$ is an isomorphism at $x$. If both the $\ce_i$'s are free then the fact that $\varphi_i$'s commute with $b_i$'s imply that $\varphi_1$ is also an isomorphism. Hence $x \in U_1(W)$.  If the $\ce_i$'s are both non-free, then $x \in Z$. But, by our construction of $W$, $x \in Z$ implies that $\varphi_1$ is an isomorphism at $x$.
\end{proof}

\section{Local analysis}\label{sec:local_analysis}

Fix a family of stable genus $g$ curves $\cc \to \cm$, together with a line bundle $\cn$ on $\cc$. The family $\cc \to \cm$ corresponds to a map $\cm \to \mgbar$. For this section we assume that $\cm$ is a Deligne--Mumford stack. We work over an excellent base scheme $S$ defined over $\Zz[1/2]$. A natural choice is to take $\cm = \mgbar$ and $S = \spec \Zz[1/2]$. Our main result in this section is the following theorem, whose proof is at the end of Section~\ref{sec:patch_deformations}.

\begin{theorem}\label{thm:smooth}
  If the moduli map $\cm \to \mgbar$ is smooth, then $\csnmbar$ is smooth over the base scheme~$S$.
\end{theorem}

In order to facilitate the study of the coarse moduli space associated to $\csnmbar$, we will study the action of the automorphism groups of objects in $\csnmbar$ on their local deformation functors. We end this section with a purely combinatorial description of this action.

\subsection{Patching local deformations} \label{sec:patch_deformations}

For general notions regarding deformation theory using Artin rings we refer to Schlessinger's original work~\cite{schlessinger} and Sernesi's textbook~\cite{sernesi-deformation}. See Appendix~\ref{app:local_deformations} for the deformation theory of nodes and roots near nodes.

Smoothness of $\csnmbar \to S$ can be checked around geometric points of $S$. Fix an algebraically closed field $k$ and a $k$-valued point $\xi$ of $\csnmbar$. Let $p \in \cm(k)$ and $q \in \mgbar(k)$ be the images of $\xi$. Then $q$ corresponds to a stable curve $X/k$ and $p$ induces a line bundle $\cn_X$ on $X$. Let $x_1,\dots,x_n \in X$ be the nodes of $X$ and suppose the nodes are indexed so that at least one of the roots of $\cn_X$ appearing in $\xi$ is non-free at $x_i$ precisely when $i \le n'$ for some $n' \le n$. 

Let $s \in S(k)$ be the image of $\xi$ and write $\Lambda$ for the complete local ring $\hat \co_{S,s}$ (Definition~\ref{def:local_rings}). Denote by $\art_\Lambda$ the category of local Artinian $\Lambda$-algebras with residue field $k$. Let us write $D_\xi \colon \art_\Lambda \to \Sets$ for the functor of infinitesimal deformations of $\xi$, this functor is pro-represented by the complete local ring of $\csnmbar$ at $\xi$. We will compute this pro-representing ring by breaking the deformation functor $D_\xi$ into simpler pieces.

Denote $\xi$ by the tuple $(X/k, \cn_X, \bar \cR)$ where $\bar\cR=(\bar \ce_i,\bar b_i,\bar \varphi_i)_{i=1}^m$ is a multiple root of $\cn_X$ on $X$. A deformation of $\xi$ over $R \in \art_{\Lambda}$ begins with a deformation $(\cx/R,\cn_\cx,\iota)$ of $(X/k,\cn_X)$ where $\iota$ is an identification of the central fiber of $\cx \to \spec R$ with $X \to \spec k$. In addition, we must have a deformation of the root $\bar \cR$ which will consist of a multiple root $\cR \colonequals  (\ce_i,b_i,\varphi_i)_{i=1}^m$ of $\cn_\cx$ and a sequence of isomorphisms $(j_i \colon \ce_i|_X \to \bar \ce_i)_{i=1}^m$ compatible with the sync data $(\varphi_i)_{i=1}^m$ and $(\bar\varphi_i)_{i=1}^m$.

Let $D_p \colon \art_\Lambda \to \Sets$ be the functor of infinitesimal deformations of $p$ over $\cm$. Forgetting the deformation of the multiple root $\bar \cR$ we obtain a map $D_\xi \to D_p$ which is the local version of the forgetful map $\csnmbar \to \cm$.

Let $x_v \in X$ be a node of $X$ and $\hat x_v\colon \spec \hat \co_{X,v} \to X$ be the complete local neighbourhood of $x_v$ in $X$. By pulling back the multiple root $\bar \cR$ via $\hat x_v$ we obtain a local multiple root $\bar\cR_v \colonequals  \hat x_v^* \bar\cR$ on $\hat \co_{X,v}$.

\begin{notation}
  Let $G_v \colon \art_\Lambda \to \Sets$ denote the functor of infinitesimal deformations of the node $\hat \co_{X,x_v}$ as in Definition~\ref{def:G}. Let $H_v \colon \art_\Lambda \to \Sets$ denote the functor of infinitesimal deformations of the node $\hat \co_{X,x_v}$ together with the local multiple root $\bar \cR_v$ as in Definition~\ref{def:H}.
\end{notation}
\begin{remark}
  Recall that if a root is free then it deforms trivially so that the forgetful map $H_v \to G_v$ is an isomorphism if all roots are free at $x_v$, that is, if $v > n'$. See Lemma~\ref{lem:free_roots_deform_trivially}.
\end{remark}

Given a deformation $(\cx/R, \cn_\cx, \iota , (\ce_i,b_i,\varphi_i,j_i)_{i=1}^m)$ of $\xi$, we can first forget the roots and then pass to a local neighbourhood of $x_v$ or we can first pass to a local neighbourhood of $x_v$ in $\cx$ and then forget the roots. As a result, we obtain the map
\begin{equation}\label{eq:pieces}
  D_\xi \to  D_{p} \times_{G_1} H_1 \times_{G_2} H_2 \cdots \times_{G_{n'}} H_{n'}.
\end{equation}

\begin{lemma}\label{lem:fpqc_descent}
  The natural transformation (\ref{eq:pieces}) is an isomorphism. In particular, a deformation of a multiple root is completely recovered by its deformations around the nodes.
\end{lemma}

\begin{proof}
  Fix an element $(\cx/R, \cn_\cx, \iota)$ of $D_{p}(R)$. For each $1 \le v \le n'$ pick a deformation $\cR_v \colonequals  (E_{v,i},b_{v,i},\varphi_{v,i},j_{v,i})_{i=1}^m$ on $\hat \co_{\cx,x_v}$ of the local multiple root $\hat \cR_v$. We will to show that there exists a unique deformation on $\cx$ of the multiple root $\bar \cR$ which pulls back to $\cR_v$ on the complete local neighbourhood of $x_v$ in $\cx$.

  Let $\xm_R$ be the maximal ideal of $R$. Let $R_l \colonequals  R/\xm_R^l$ and $\cx_l \colonequals  \cx|_{R_l}$ for all $l \ge 0$.  For each $l$ and $v \le n'$ we can pullback $\cR_v$ to the formal neighbourhood of $x_v$ in $\cx_l$. We will denote this deformation of $\bar \cR_v$ by $\cR_{v,l}$. 

  Using induction, we fix $N \ge 0$ and suppose that there is a unique deformation of the multiple root $\bar\cR$ on $\cx_N$ such that for all $v \le n'$ this deformation agrees with $\cR_{v,N}$ around the node $x_v$.

  By constructing a lift of this deformation to $\cx_{n+1}$ and showing that this lift is unique up to a unique isomorphism will end the proof. We will do this by fpqc-descent on $\cx_{n+1}$. The synchronized roots around the formal neighbourhoods of the nodes are one portion of the descent data.  For the rest of the descent data, we will construct the root away from the nodes and then show compatibility.

  On the complement $W$ of the nodes $x_1,\dots,x_{n'}$, the roots we have are all free. Use Lemma~\ref{lem:free_roots_deform_trivially} and Remark~\ref{rem:free_roots_deform_trivially_always} to conclude that each root deforms uniquely in $W$. This uniqueness also proves compatibility with the formal neighbourhoods around the nodes. 
\end{proof}

It is well known~\cite{deligne-mumford} that $\hat \co_{\mgbar,q} \simeq \Lambda[[t_1,\dots,t_{3g-3}]]$ where the generators $t_i$ to correspond to the deformation of the node $x_i$ for $i = 1, \dots,n$, the labeling of the rest of the generators correspond to deformations of the components of the normalization of $X$. 

With $n' \le n$ defined as above, let us define a finite extension $\Lambda[[t_1,\dots,t_{3g-3}]] \to \Lambda[[\tau_1,\dots,\tau_{3g-3}]]$ so that $t_i \mapsto \tau_i^2$ when $i \le n'$ and $t_i \mapsto \tau_i$ when $i > n'$.

\begin{proposition}\label{prop:local_csnmbar}
  The complete local ring of $\csnmbar$ at $\xi$ is isomorphic over $\Lambda$ to the tensor product $\hat\co_{\cm,p}\otimes_{\Lambda[[t_1,\dots,t_{3g-3}]]} \Lambda[[\tau_1,\dots,\tau_{3g-3}]]$.
\end{proposition}

\begin{proof}
  We computed in Appendix~\ref{app:local_deformations} that $G_v$ is pro-represented by $\Lambda[[t_v]]$ and $H_v$ is pro-represented by $\Lambda[[\tau_v]]$ with the map $H_v \to G_v$ given by $t_v \mapsto \tau_v^2$ if $v \le n'$ and $t_v \mapsto \tau_v$ if $v > n'$. The isomorphism of the map (\ref{eq:pieces}) proved in Lemma~\ref{lem:fpqc_descent} finishes the proof.
\end{proof}

Now we are ready to prove the main theorem of this section. We will stick to the notation of Proposition~\ref{prop:local_csnmbar}.

\begin{proof}[Proof of Theorem~\ref{thm:smooth}]
  If $\cm \to \mgbar$ is smooth then $\hat\co_{\cm,p}$ is smooth over $\Lambda[[t_1,\dots,t_{3g-3}]]$. Since smoothness is stable under pullbacks, the tensor product representing the local ring of $\csnmbar$ at $\xi$ is smooth over $\Lambda[[\tau_1,\dots,\tau_{3g-3}]]$, which in turn is smooth over $\Lambda$. Being smooth is stable under composition and it follows that $\csnmbar \to S$ is smooth.
\end{proof}

\subsection{Automorphism groups}\label{sec:aut_groups}

The complete local ring of $\mgbar$ at $X$ is $\Lambda[[t_1,\dots,t_{3g-3}]]$. The moduli map $\cm \to \mgbar$ gives rise to the map $\Lambda[[t_1,\dots,t_{3g-3}]] \to \hat \co_{\cm,p}$. For each $i=1,\dots,n$ let us denote the image of $t_i$ in $\hat \co_{\cm,p}$ by $\bar t_i$. As a result, we can reinterpret Proposition~\ref{prop:local_csnmbar} as follows:
\begin{equation}\label{eq:local}
  \hat \co_{\csnmbar,\xi} \simeq \hat \co_{\cm,p}[\tau_1,\dots,\tau_{n'}]/(\tau_1^2 - \bar t_1, \dots ,\tau_{n'}^2 - \bar t_{n'}).
\end{equation}
Scaling $\tau_i$ by $\pm 1$ gives an automorphisms of the ring $\hco_{\csnmbar,\xi}$ fixing $\hco_{\cm,p}$. 

There is an action of $\aut(\xi)$ on $\hco_{\csnmbar,\xi}$ defined as follows. The group $\aut(\xi)$ acts on the functor of infinitesimal deformations $D_\xi$ of $\xi$ by changing the identification of the central fiber of a deformation through post-composition via an automorphism of the central fiber. Since $\hco_{\csnmbar,\xi}$ pro-represents $D_\xi$, the group $\aut(\xi)$ acts on it. Any subgroup of $\aut(\xi)$ fixing $(X,\cn_X)$ over $\cm$ will fix $\hco_{\cm,p}$.

\begin{definition}\label{def:aut0}
  The subgroup of $\aut(\xi)$ fixing $(X,\cn_X)$ in $\cm$ will be denoted by $\aut_0(\xi)$ and will be called the group of \emph{inessential automorphisms}, in accordance with~\cite{cornalba, caporaso-casagrande-cornalba}. By the paragraph above, we get a natural morphism 
  \begin{equation}\label{eq:inessential_action}
    \rho\colon \aut_0(\xi) \to \aut(\hco_{\csnmbar,\xi}/\hco_{\cm,p}).
  \end{equation}
\end{definition}

Let $W=\{x_1,\dots,x_{n'}\} \subset X$ be the set of nodes where at least one root in $\xi$ is non-free. Denote by $X_W \to X$ the partial normalization of $X$ at $W$. We will write $V$ for the set of connected components of $X_W$. 

\begin{notation}\label{not:graph}
  Let $\Gamma$ be the graph with vertex set $V$ and edge set $W$, the ends of an edge $w \in W$ are the components of the partial resolution $X_W \to X$ containing a preimage of $w$.
\end{notation}

We will consider the cohomology of the graph $\Gamma$ with coefficients in the field $\Ff_2$ with two elements. Note that we do not need an orientation on the edges of $\Gamma$ when working with these coefficients. For our applications, it makes sense to identify $\Ff_2$ with the set $\{\pm 1\}$ using the bijection $\Ff_2 \to \{\pm 1\}; u \mapsto (-1)^u$. Through this bijection we endow the set $\{\pm 1\}$ with a field structure and denote the resulting field by $\Ff_2^{\pm}$.

Let $C^0(\Gamma) = \Ff_2^{\pm}\langle V \rangle$ and $C^1(\Gamma) = \Ff_2^{\pm}\langle W \rangle$ be the 0-chains and 1-chains of $\Gamma$ respectively. The usual coboundary map $\del \colon C^0(\Gamma) \to C^1(\Gamma)$ gives rise to the cohomology groups $\H^0(\Gamma)$ and $\H^1(\Gamma)$ which fit into the exact sequence
\begin{equation}\label{eq:graph_cohomology}
  \begin{tikzcd}
    0 \arrow[r] & \H^0(\Gamma) \arrow[r] & C^0(\Gamma) \arrow[r,"\del"] & C^1(\Gamma)  \arrow[r] & \H^1(\Gamma) \arrow[r] & 0 .
  \end{tikzcd}
\end{equation}

\begin{remark}\label{rem:C1_in_aut}
  For each element $h \in C^1(\Gamma)$ we can define an automorphism of $\hco_{\csnmbar,\xi}$ over $\hco_{\cm,p}$ by scaling $\tau_i$ by the value of $h(x_i) \in \{\pm 1\}$, this gives an inclusion
  \begin{equation}\label{eq:C1_in_aut}
   \sigma\colon C^1(\Gamma) \toi \aut(\hco_{\csnmbar,\xi}/\hco_{\cm,p}).
  \end{equation}
\end{remark}

Define a map $\psi \colon \aut_0(\xi) \to C^1(\Gamma)$ by the following rule. For each $a \in \aut_0(\xi)$ we need to define a value $a(x) \in \{\pm 1\}$ for each $x \in W$. Given $x \in W$, there is at least one index $i \le m$ such that the $i$-th root $(\bar \ce_i,b_i)$ is not locally free at $x$. Pull back $(\bar \ce_i,\bar b_i)$ to the formal neighbourhood $\hco_{X,x}$ and apply Lemma~\ref{lem:iso-class} to conclude that the action of the automorphism $a$ on the $i$-th root around $x$ can be described by a matrix of the form $\begin{psmallmatrix} \varepsilon_1 & 0  \\  0 & \varepsilon_2  \end{psmallmatrix}$ where $\varepsilon_1,\varepsilon_2 \in \{\pm 1\}$. Let us define $\psi(a)(x) = \varepsilon_1 \varepsilon_2$.

\begin{lemma}
  The value of $\psi(a)(x)$ is well defined.
\end{lemma}
\begin{proof}
  Although the sign $\varepsilon_1\varepsilon_2$ obtained from Lemma~\ref{lem:iso-class} is not well defined for an isomorphism between two different roots, it is well defined for an automorphism of a root. As for the choice of $i$ used in defining $\psi(a)(x)$, the sync data forces the sign $\varepsilon_1\varepsilon_2$ to be independent of this choice, see Appendix~\ref{app:multiple_roots} for more details on local multiple roots.
\end{proof}

\begin{theorem}\label{thm:aut_action}
  The map $\psi$ defined above commutes with the natural maps defined in (\ref{eq:C1_in_aut}) and (\ref{eq:inessential_action}). That is, the following diagram is commutative:
  \begin{equation}
    \begin{tikzcd}
      \aut_0(\xi) \arrow[rr,"\psi"] \arrow[dr,"\rho"'] &             & C^1(\Gamma) \arrow[ld,"\sigma",hook]\\
      & \aut(\hco_{\csnmbar,\xi}/\hco_{\cm,p}) &
    \end{tikzcd}.
  \end{equation}
\end{theorem}
\begin{proof}
  Fix $a \in \aut_0(\xi)$ and $x_i \in W$. We will denote the parity by $c\colonequals  \varepsilon_1\varepsilon_2 = \psi(a)$. Consider the pullback of $\xi$ to $\hco_{X,x_i}$ and its universal deformation as in Appendix~\ref{app:multiple_roots}. We show in Theorem~\ref{thm:universal-def-of-mult-roots} that this universal deformation has base $\Lambda[[\tau_i]]$. The automorphism $a$ induces an action on this base ring $\Lambda[[\tau_i]]$ defined as in the description leading up to Definition~\ref{def:aut0}. It suffices to show that this induced action consists of scaling $\tau_i$ by $c$.

  Since $\tau_i \neq 0$ this change in sign is required to accommodate an isomorphism between the two universal families of local multiple roots whose square has parity $c$, as in Definition~\ref{def:parity}. This follows directly from the arguments given in Appendix~\ref{app:multiple_roots}.
\end{proof}

Now we want to express the image of the map $\psi\colon \aut_0(\xi) \to C^1(\Gamma)$ in a more combinatorial fashion. For each $i=1,\dots,m$ we can forget all but the $i$-th root in $\xi$ in order to obtain $\xi_i \colonequals  (X,\cn_X,\bar \ce_i,\bar b_i)$. We will write $W_i \subset W$ for the set of nodes on which $\bar \ce_i$ is not free and by $V_i$ the irreducible components of the normalization $X_{W_i} \to X$ of $X$ at $W_i$. As in Notation~\ref{not:graph} we define a graph $\Gamma_i = (V_i,W_i)$. Moreover, the construction of the map $\psi$ applies also when $m=1$ and in particular to $\xi_i$ giving us maps $\psi_i \colon \aut_0(\xi_i) \to C^1(\Gamma_i)$. We have exact sequences as in (\ref{eq:graph_cohomology}) obtained by replacing $\Gamma$ with $\Gamma_i$.

In $\aut_0(\xi)$ and $\aut_0(\xi_i)$ we have a distinguished automorphism which scales all roots by $-1$. Let $\{\pm 1\}$ denote the corresponding subgroup generated by this element. It is clear that both $\psi$ and $\psi_i$ contain the subgroup $\{ \pm 1\}$ in their kernels, which prompts the following notation.

\begin{notation}
  Let $\autu_0(\xi)= \aut_0(\xi)/\{\pm 1\}$ and $\autu_0(\xi_i) = \aut_0(\xi_i)/\{\pm 1\}$ for $i=1,\dots,m$.
\end{notation}

\begin{lemma}[\cite{cornalba, caporaso-casagrande-cornalba}]\label{lem:aut-i}
  For $i=1,\dots,m$ we have $\aut_0(\xi_i) \simeq C^0(\Gamma_i)$ and $\psi_i$ establishes an isomorphism $\autu_0(\xi_i) \isoto \ker(C^1(\Gamma_i) \to \H^1(\Gamma_i))$.
\end{lemma}

\begin{proof}
  Let $\nu_i\colon X_{W_i} \to X$ be the partial normalization of $W_i \subset X$. Let $P_i = \nu_i^*(W_i)$ and $N_i \colonequals  \nu_i^*(\cn_X)(-P_i)$. Jarvis shows in \S 4.1.1~\cite{jarvis-torsion-free} that there exists a line bundle $L_i$ on $X_{W_i}$, and a squaring map $\beta_i: L_i^{\otimes 2} \isoto N_i$ such that $\bar \ce_i \simeq \nu_{i,*}L_i$ and $\bar b_i$ is obtained from $\beta_i$. Furthermore, he shows that $\aut(\bar \ce_i,\bar b_i) \simeq \aut(L_i,\beta_i)$.

  The automorphism group of $(L_i,\beta_i)$ is clearly isomorphic to $\H^0(X_{W_i},\mu_2)$ where $\mu_2 \subset \co_{X_{W_i}}^*$ is the kernel of the squaring map. We have an obvious identification $\H^0(X_{W_i},\mu_2) \simeq C^0(\Gamma_i)$ which proves $\aut_0(\xi_i) \simeq C^0(\Gamma_i)$.  
  Since $\Gamma_i$ is connected, $\H^0(\Gamma_i) \simeq \set{\pm 1}$. Therefore, $\autu_0(\xi_i) = \aut_0(\xi_i)/\set{\pm 1} \simeq C^0(\Gamma_i)/\H^0(\Gamma_i) \simeq \im \del$.
\end{proof}

Since elements in $\aut_0(\xi)$ act on individual roots, we have natural maps $\aut_0(\xi) \to \aut_0(\xi_i)$ obtained by restricting the action of an automorphism of $\xi$ to just the $i$-th root. Any automorphism of multiple roots can be recovered from its action on the individual roots so the combined map $\aut_0(\xi) \to \prod_{i=1}^m \aut_0(\xi_i)$ is an injection.

We can also define maps $C^1(\Gamma) \tos C^1(\Gamma_i)$ obtained by sending all edges in $W\setminus W_i$ to zero. The joint map $C^1(\Gamma) \to \prod_{i=1}^m C^1(\Gamma_i)$ is an injection since each edge in $W$ appears in at least one $W_i$.

\begin{lemma}\label{lem:cartesian_auts}
  With the natural maps described above, we obtain the following \emph{Cartesian} diagram:
  \[
    \begin{tikzcd}
      \autu_0(\xi_0) \arrow[r,hook]\arrow[d,"\psi_0"']\arrow[dr, phantom, "\ulcorner", very near start] & \prod_{i=1}^m \autu_0(\xi_i) \arrow[d,"\prod \psi_i",hook] \\ 
      C^1(\Gamma) \arrow[r,hook] & \prod_{i=1}^m C^1(\Gamma_i)
    \end{tikzcd}
  \]
\end{lemma}
\begin{proof}
  The commutativity is immediate since the value of $\psi(a)(x)$ is determined by the action of $a$ on any any root $(\bar \ce_i, \bar b_i)$ which is not free on $x$. To see that the diagram is Cartesian, we observe that a sequence of automorphisms $(a_i)_{i=1}^m$ where $a_i$ acts on $\xi_i$ are compatible with the sync data if the image of $(a_i)_{i=1}^m$ lies in the image of $C^1(\Gamma)$.
\end{proof}

\begin{proposition}\label{prop:combinatorial_aut}
  The map $\psi$ identifies $\autu_0(\xi)$ with the kernel of the map $C^1(\Gamma) \to \prod_{i=1}^{m} \H^1(\Gamma_i)$.
\end{proposition}
\begin{proof}
  By Lemma~\ref{lem:cartesian_auts} we conclude that $\autu_0(\xi)$ is the intersection of $C^1(\Gamma)$ with $\prod_{i=1}^m \autu_0(\xi_i)$ in $\prod_{i=1}^m C^1(\Gamma_i)$. Now apply Lemma~\ref{lem:aut-i}.
\end{proof}

\begin{example}
  Let us consider Example~\ref{ex:red}. Here $\Gamma=\Gamma_1=\Gamma_2$ consists of two vertices and one edge between them. Therefore, $C^1(\Gamma)\simeq \Ff_2$ and $\prod_{i=1}^2 \H^1(\Gamma_i) = 0$. We conclude that $\autu_0(\xi)\simeq \Ff_2$. In \eqref{eq:local} we have $n'=1$ so $\hat \co_{\csnmbar,\xi} \simeq \hat \co_{\cm,p}[\tau_1]/(\tau_1^2 - \bar t_1)$ and the generator of $\autu_0(\xi)$ acts by $\tau_1\mapsto - \tau_1$. In particular, the coarse moduli space of double spin curves is not branched around $\xi$ over the moduli space of curves.
\end{example}

\begin{example}
  Consider Example~\ref{ex:irred}. Here $\Gamma=\Gamma_1=\Gamma_2$ consists of one vertex and a loop. Therefore, $C^1(\Gamma) \to \prod_{i=1}^2 \H^1(\Gamma_i)$ becomes $\Ff_2 \to \Ff_2^2 : 1 \mapsto (1,1)$. In particular, $\autu_0(\xi)=0$. Using \eqref{eq:local} we see that the locus of such $\xi$ form a divisorial branch locus of the coarse moduli of double spin curves over the moduli space of curves.
\end{example}

\section{Fundamental properties of the moduli of multiple-roots}\label{sec:fundamental_properties}

In this section we prove the additional structural results regarding $\csnmbar\to \cm$ in order to complete the proof of Theorem~\ref{thm:intro_compactify}. Throughout this section we will assume $\cm$ is a Deligne--Mumford stack.

\subsection{Proper, Deligne--Mumford compactification}

  As we proved in Theorem~\ref{thm:algebraic} that $\csnmbar$ is algebraic, it follows that the relative diagonal $\csnmbar \to \cm$ is representable by algebraic spaces.

\begin{lemma}\label{prop:diagonal_finite_unramified}
  The diagonal $\Delta \colon \csnmbar \to \csnmbar \times_\cm \csnmbar$ is finite and unramified.
\end{lemma}

\begin{proof}
  Fix a morphism $B \to \csnmbar \times_\cm \csnmbar$ where $B$ is a scheme. This defines a curve $C \to B$ and a pair of multiple roots $\cR = (\ce_i,b_i,\varphi_i)_{i=1}^m$ and $\cR' = (\ce_i',b_i',\varphi_i')_{i=1}^m$ on the curve $C$. We claim that the isomorphism functor $\isom_C(\cR,\cR')$ is represented by a finite and unramified scheme over~$B$. 
  
  An isomorphism of a multiple root is an isomorphism of the underlying sequence of roots compatible with the sync data. Let $\cR_i=(\ce_i,b_i)$ and $\cR_i'=(\ce_i',b_i')$ be the $i$-th roots and consider the natural injection $\isom_C(\cR,\cR') \to \prod_{i=1}^m \isom_C(\cR_i,\cR_i')$, where the product of the functors $\isom_C(\cR_i,\cR_i')$ is to be taken over $B$. This product is finite and unramified since each component $\isom_C(\cR_i,\cR_i')$ is represented by a finite and unramified scheme over $B$ as shown in \S 4.1.4.3 of~\cite{jarvis-torsion-free}. We claim $\isom_C(\cR,\cR')$ is a component of the product and we will perform a series of reductions to prove this.

  Since the stacks under consideration are locally noetherian and the question is local on the target of $\Delta$ we may assume $B$ to be noetherian. Since the diagonal is representable, $\isom_C(\cR,\cR')$ is locally noetherian and we may reduce to the case where $B=\spec R$ where $R$ is a complete discrete valuation ring.  Let the $\eta$ and $\sigma$ denote the generic and special points of $B$ respectively. To conclude the proof we need only prove that given a sequence of isomorphisms between the roots, the property of being compatible with the sync data both specializes and generalizes. This problem requires studying pairs of roots at a time and therefore we will assume $m=2$.

  Consider a map $h\colon \spec R \to \prod_{i=1}^m \isom_C(\cR_i,\cR_i')$, which gives us a family of curves $\cx \to \spec R$ and a sequence of isomorphism $(h_i \colon (\ce_i',b_i') \to (\ce_i',b_i'))_{i=1}^m$  between the roots. We need to check that the sequence $(h_i)_{i=1}^m$ is compatible with the sync data over the generic fiber iff it is compatible with the sync data over the special fiber.

  Compatibility is trivially satisfied except around the nodes where both roots may be singular. In this respect, Lemma~\ref{lem:fpqc_descent} implies that compatibility need only be checked around the formal neighbourhood of the node of the special fiber. In Lemma~\ref{lem:iso-class} we show that in the formal neighbourhood of a node, the isomorphisms $h_i$ between $\ce_i$ and $\ce_i'$ are of the form $\begin{bsmallmatrix}  \varepsilon_i'& 0  \\  0 & \varepsilon_i'' \end{bsmallmatrix}$, with $\varepsilon_i',\varepsilon_i'' \in \set{\pm 1}$. The symmetric square $h_i^2$ equals $\begin{bsmallmatrix} 1 & 0 & 0 \\  0 & \varepsilon_i & 0 \\ 0 & 0 & 1  \end{bsmallmatrix}$, where $\varepsilon_i = \varepsilon_i' \varepsilon_i''$. 

  In this formal neighbourhood of the node, we can replace the sync data with an isomorphism $\psi: \ce_1^2 \to \ce_2^2$ on the first pair of roots and $\psi'$ on the second pair of roots (see Remark~\ref{rem:replace_with_squares}). From Proposition~\ref{prop:iso_square} we know that $\psi = \begin{bsmallmatrix} 1 & 0 & 0 \\  0 & \varepsilon & 0 \\ 0 & 0 & 1  \end{bsmallmatrix}$ and $\psi' = \begin{bsmallmatrix} 1 & 0 & 0 \\  0 & \varepsilon' & 0 \\ 0 & 0 & 1  \end{bsmallmatrix}$ where $\varepsilon,\varepsilon' \in \set{\pm 1}$.

  These four isomorphisms give a commuting diagram iff $\varepsilon \varepsilon_1 = \varepsilon'\varepsilon_2$. This equality holds over the special fiber iff it holds over the generic fiber. 
\end{proof}

\begin{proposition}\label{prop:dm_stack}
  The moduli space $\csnmbar$ is a Deligne--Mumford stack.
\end{proposition}
\begin{proof}
  The proposition above combined with the fact that $\cm$ is Deligne--Mumford stack yields this result.
\end{proof}

\begin{lemma} \label{lem:proper}
  The morphism $\csnmbar \to \cm$ is proper.
\end{lemma}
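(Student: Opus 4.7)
The plan is to verify the valuative criterion of properness. Separatedness of $\csnmbar \to \cm$ follows from Proposition \ref{prop:diagonal-finite-unramified}. For finite type over $\cm$, note that $\csnmbar = \ca_2$ is a closed substack of $\ca_1$, which is locally of finite type over $\cm$ by the proof of Proposition \ref{prop:a2-is-closed}; quasi-compactness will be deduced from the forgetful map analysis below.

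I would first control the forgetful map $\csnmbar \to \csnmmbar{}\times_\cm\cdots\times_\cm\csnmmbar{}$ to the $m$-fold fiber product, which remembers only the underlying sequence of roots. The target is of finite type and proper over $\cm$ by Proposition \ref{prop:m1-finite-type}, so it suffices to show the forgetful map is finite. The formal local analysis of Sections \ref{sec:rooted-node}--\ref{sec:rooted-curve} (see especially Theorem \ref{thm:faltings}, Lemma \ref{lem:iso-class}, and Lemma \ref{lem:closed-immersion}) shows that over a fixed tuple of roots, a synchronization amounts, at each node where at least two of the $\ce_i$ are simultaneously non-free, to one of at most two signs $u \in \{\pm 1\}$. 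The fibers are therefore finite and discrete, yielding both the quasi-compactness of $\csnmbar$ over $\cm$ and a reduction of properness to the valuative criterion for the forgetful map.

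For the valuative criterion itself, let $R$ be a complete DVR with fraction field $K$ and suppose we are given compatible morphisms $\spec R \to \cm$ and $\spec K \to \csnmbar$. By Proposition \ref{prop:m1-finite-type}, each of the $m$ individual roots extends to $\cc_R$ after a finite base change of $R$, producing a sequence $(\ce_i, b_i)_{i=1}^m$ on $\cc_R$. It remains to extend the sync data from $K$ to $R$. Away from the discriminant locus the sheaf $\cf$ and the maps $\varphi_i$ are canonically determined by the $b_i$, so the problem is concentrated in the formal neighbourhoods of nodes where at least two $\ce_i$ are simultaneously non-free. At a node smoothing out in the generic fiber ($\pi \neq 0$ in $R$), after a further finite base change extracting $p$ with $p^2 = \pi$, Faltings' classification identifies $(\ce_i, b_i) \simeq (E(p_i, p_i), s)$ with $p_i \in \{\pm p\}$, and the sign $u$ in the matrix $[\psi] = \diag(1, u, 1)$ of Lemma \ref{lem:closed-immersion} is then forced by the equation $u p_2 = p_1$. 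At a node persisting over the whole base ($\pi = 0$ in $R$), the sign $u \in \{\pm 1\}$ specializes uniquely from the generic fiber, since $R \to K$ is injective. Gluing via Lemma \ref{lem:fpqc-descent} yields the desired global extension, and uniqueness follows from the already established separatedness.

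The main obstacle I anticipate is the nodes that smooth out in the generic fiber, where the generic sync data carries no a priori information about a local sign but the sign must nonetheless be specified on the special fiber. The resolution is that, in precisely this case, Faltings' classification combined with the rigidity from $\pi \neq 0$ (Remark \ref{rem:determine-sign}) forces the sign to be uniquely determined by the chosen square root $p$ of $\pi$, so there is no genuine ambiguity once the necessary finite base change has been performed.
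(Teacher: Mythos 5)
Your proposal is correct and follows essentially the same route as the paper: reduce via the forgetful map to a fiber product of the single-root stacks (you use the full $m$-fold product, the paper inducts via $\csnmmbar{m-1}\times_{\cm}\csnmmbar{}$, a cosmetic difference), then argue that the sync data over the generic fiber of a complete DVR extends uniquely by the local sign analysis at the nodes. The paper handles that last step by citing the argument in the proof of Proposition \ref{prop:diagonal-finite-unramified}, whereas you re-derive the sign constraint explicitly from Lemma \ref{lem:closed-immersion} and Remark \ref{rem:determine-sign}; the content is the same.
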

\begin{proof}
  We prove this by the valuative criterion of properness and induction on $m\ge 1$. The result for $m=1$ is part of Theorem~\ref{thm:m_is_1}. Then we need only show that the map $\cy\colonequals \csnmbar \to \csnmmbar{m-1}\times_{\cm}\csnmmbar{}$ constructed in the first paragraph of the proof of Theorem~\ref{thm:algebraic} is proper. As the diagonal is locally noetherian we restrict to checking the valuative criterion using complete DVRs.

  Let $R$ be a complete DVR, with residue field $K$. Consider a 2-commutative diagram:
  \[
    \begin{tikzcd}
      \spec K \arrow[r] \arrow[d] & \csnmbar \arrow[d,""{name=U,below}]\\
      \spec R \arrow[r,""{name=D,near end}]           & \cy \arrow[Rightarrow, bend right=50,from=U,to=D]
    \end{tikzcd}
  \]

  This means that we have a synchronized $(m-1)$-tuple of roots and an $m$-th root over the curve $C_R \to \spec R$. Furthermore, these roots are all synchronized over the general fiber. However, as we demonstrated in the proof of Proposition~\ref{prop:diagonal_finite_unramified} a synchronization on the generic fiber over a complete DVR extends to the entire family uniquely.
\end{proof}

\noindent We now prove that $\csnmbar$ is indeed a `closure' of $\csnm$ in the sense we would expect.

\begin{lemma} \label{lem:dense-open-immersion}
  If $\cc \to \cm$ is generically smooth then $\csnm \to \csnmbar$ is a dense open immersion.
\end{lemma}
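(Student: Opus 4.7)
The plan is to identify $\csnm$ with an open substack of $\csnmbar$ and then reduce the density question to the local structure theorem, Corollary~\ref{cor:local-structure}.

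For the open immersion, the condition that each $\ce_i$ be locally free is an open condition on $\csnmbar$, since being locally free is open for a single coherent sheaf on a flat family and propagates to a finite tuple. On this open locus each $V_i$ equals the whole curve $C$, so the pre-sync condition forces every $\varphi_i\colon \cf\to\ce_i^{2}$ to be a global isomorphism, making $\cf$ canonically isomorphic to each line bundle $\ce_i^{\otimes 2}$; the sync condition is then automatic, and a multiple root reduces to its underlying $m$-tuple of locally free roots. This identifies the open substack with $\csnm$.

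For density, I will check at each geometric point $\xi\in\csnmbar$ with image $p\in\cm$. By Corollary~\ref{cor:local-structure} the completed local ring of $\csnmbar$ at $\xi$ is $T'=\hat\co_{\cm,p}[\tau_1,\dots,\tau_{n'}]/(\tau_i^2-\tilde t_i)$, and the preimage of $\csnm$ in $\spec T'$ is the principal open $D(\tau_1\cdots\tau_{n'})$, because the Faltings module $E(\tau_i,\tau_i)$ is locally free exactly when $\tau_i$ is a unit. Density near $\xi$ is therefore equivalent to this open being dense in $\spec T'$, and using $\tau_i^2=\tilde t_i$ it suffices to show that no $\tilde t_i$ lies in a minimal prime of $\hat\co_{\cm,p}$.

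\'Etale-locally around $p$, the family $\cc\to\cm$ admits a nodal coordinate description at $x_i$ of the form $\spec\co_{\cm,p}[x,y]/(xy-t_i)$ for some $t_i\in\co_{\cm,p}$ whose image in the completion is exactly $\tilde t_i$; the vanishing $V(t_i)$ cuts out the \'etale-local closed substack $Z_i\subset\cm$ on which $x_i$ persists in the fibers of $\cc\to\cm$. Generic smoothness forces $Z_i$ to be nowhere dense in $\cm$, so $t_i$ avoids every minimal prime of $\co_{\cm,p}$. Since the completion map $\co_{\cm,p}\to\hat\co_{\cm,p}$ is flat and satisfies going-down, the contraction of any minimal prime of $\hat\co_{\cm,p}$ is a minimal prime of $\co_{\cm,p}$; if $\tilde t_i$ lay in a minimal prime of $\hat\co_{\cm,p}$, then $t_i$ would lie in a minimal prime of $\co_{\cm,p}$, a contradiction. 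The main obstacle in this argument is producing $t_i$ in an \'etale (not merely formal) neighbourhood of $p$, which uses the versal deformation of the node recalled in Theorem~\ref{thm:univ-def-of-node} together with Artin approximation.
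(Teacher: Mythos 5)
Your proof is correct and follows the same underlying strategy as the paper: both reduce density to the local deformation theory of Section~4 (your Corollary~\ref{cor:local-structure}, the paper's Theorem~\ref{thm:local-deformation-functor}). The paper's version is considerably terser; it does not spell out the open-immersion half (which you treat carefully and correctly, including the observation that the sync data is unique once all $\ce_i$ are line bundles), and it phrases density as ``every singular fiber can be deformed to a smooth one,'' leaving the translation to representing rings implicit. Your explicit argument with minimal primes, finite flatness of $\hat\co_{\cm,p}\to T'$, and going-down along completion is a correct unwinding of that statement.

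One small remark: the final paragraph's detour through an étale-local $t_i$ and Artin approximation is unnecessary. You can stay entirely in the formal picture. If $\tilde t_i$ lay in a minimal prime $\hat{\mathfrak q}$ of $\hat\co_{\cm,p}$, then the fiber of $\cc$ over the point $\hat{\mathfrak q}$ has a node (since $\hat\co_{\cy,x_i}\otimes\kappa(\hat{\mathfrak q})\simeq\kappa(\hat{\mathfrak q})[[x,y]]/(xy)$), and this is a base change of the fiber over $\mathfrak q:=\hat{\mathfrak q}\cap\co_{\cm,p}$, so that fiber is also non-smooth. By flatness and going-down, $\mathfrak q$ is a minimal prime of $\co_{\cm,p}$, hence a generic point of $\cm$, contradicting generic smoothness. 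This avoids invoking approximation, but the version you wrote is also valid.
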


\begin{proof}
  Since all roots on a smooth curve are locally free, it suffices to show that any root can be deformed onto a smooth curve. Provided that any singular curve $X$ can be deformed to a smooth curve over $\cm$, it follows immediately from the local deformation functors discussed in Section~\ref{sec:patch_deformations} that any tuple of roots on $X$ can also be deformed onto a smooth curve over $\cm$. In particular, this is an immediate consequence of Lemma~\ref{lem:fpqc_descent}.
\end{proof}

\begin{remark}
  If $\cc \to \cm$ is not assumed to be generically smooth the result will certainly not hold, even when $m=1$. For example one could take $\cm = \spec k$. In this case, the isomorphism classes of roots of a fixed line bundle form a discrete set. If $\cc$ is singular, then some of these roots will not be free.
\end{remark}

\subsection{The coarse moduli space}

\begin{lemma} \label{lem:quasi-finite}
 The map $\csnmbar \to \cm$ is quasi-finite.
\end{lemma}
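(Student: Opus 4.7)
The plan is to argue by induction on $m \ge 1$, with the base case $m=1$ supplied by Proposition \ref{prop:qf}. For the inductive step, I would factor the structure map through the forgetful map of Remark \ref{rem:forgetful}:
\[
  \csnmbar \longrightarrow \csnmmbar{m-1} \times_\cm \csnmmbar{} \longrightarrow \cm.
\]
The composition $\csnmmbar{m-1} \times_\cm \csnmmbar{} \to \cm$ is quasi-finite: by the induction hypothesis $\csnmmbar{m-1} \to \cm$ is quasi-finite, its base change along $\csnmmbar{} \to \cm$ inherits this property, and composing with the quasi-finite map $\csnmmbar{} \to \cm$ (Proposition \ref{prop:qf}) yields a quasi-finite morphism. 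It therefore suffices to prove that the forgetful map itself is quasi-finite.

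Since Theorem \ref{thm:algebraic} already establishes that $\csnmbar$ is locally of finite type, what remains is to bound the fibers of the forgetful map over geometric points. A geometric point of $\csnmmbar{m-1} \times_\cm \csnmmbar{}$ supplies a stable curve $X/\bar k$, a synchronized $(m-1)$-tuple of roots $\xR'$, and a further root $\cR_m$; the fiber parametrizes isomorphism classes of sync data extending $(\xR', \cR_m)$ to a synchronized $m$-tuple (and may of course be empty, which is fine).

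To count these extensions I would invoke Corollary \ref{cor:definitions-agree} in order to trade the ``synced tuple'' formulation for the more combinatorial one of Definition \ref{def:generalized-multiple-roots}, in which a synchronization is built out of a synchronization at each node of $X$ separately. At a single node, the possible synchronizations are classified by Lemma \ref{lem:iso-class} and Remark \ref{rem:determine-sign}: in the non-trivial case the synchronization is forced by the roots themselves, while otherwise the ambiguity is captured by a finite set of sign choices. Since $X$ has only finitely many nodes and each node contributes only finitely many synchronizations, the total fiber is finite.

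The main technical point to verify is that Corollary \ref{cor:definitions-agree} indeed applies. Its hypothesis (Set-up \ref{setup:nice-R}) asks that the base ring be a complete local noetherian ring with algebraically closed residue field, which is exactly what one obtains at a geometric point by passing to the completion of the local ring; this is where the reduction from the global sync-data definition to the per-node combinatorial picture is authorized, and I expect it to be the only step requiring real care.
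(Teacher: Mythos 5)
Your proof is correct and essentially mirrors the paper's argument: the paper also reduces to the quasi-finiteness of $\csnmmbar{} \to \cm$ (Proposition~\ref{prop:qf}), passes through the forgetful map to a fiber product of single-root spaces, invokes Corollary~\ref{cor:definitions-agree} to switch to the per-node description of synchronizations, and concludes finiteness from Lemma~\ref{lem:iso-class}. The only cosmetic difference is that you set up an induction on $m$ via the $\csnmmbar{m-1} \times_\cm \csnmmbar{}$ factorization of Remark~\ref{rem:forgetful}, whereas the paper maps directly to the full $m$-fold product $\csnmmbar{} \times_\cm \cdots \times_\cm \csnmmbar{}$ in one step.
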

\begin{proof}
  We will build on the fact that $\csnmmbar{} \to \cm$ is quasi-finite which is a part of Theorem~\ref{thm:m_is_1}. Fix a geometric point of the $m$-fold product $\csnmbar{} \times_{\cm} \dots\times_{\cm} \csnmmbar{}$. Our goal is to show that there are finitely many synchronizations on the corresponding sequence of roots. Using Lemma~\ref{lem:fpqc_descent} we see that we need to synchronize the roots only around the nodes. Now apply Remark~\ref{rem:replace_with_squares} to express synchronizations as a sequence of isomorphisms of the local multiple roots. Now apply Lemma~\ref{lem:iso-class} to see that there are only finitely many isomorphisms.
\end{proof}

\noindent  The proof below is adapted from Proposition 3.1.1~\cite{jarvis-geometry}.

\begin{proposition} \label{prop:projective}
  If the coarse moduli space of $\cm$ is projective over $S$ then the coarse moduli space of $\csnmbar$ is projective over $S$.
\end{proposition}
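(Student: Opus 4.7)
The plan is to construct a coarse moduli space for $\csnmbar$ using the Keel--Mori theorem and then show that the induced map to the coarse moduli space of $\cm$ is finite, whence projectivity over $S$ will follow automatically.

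First, I would note that $\csnmbar$ is a Deligne--Mumford stack by Corollary \ref{cor:dm-stack}, locally of finite type over $S$ by Theorem \ref{thm:algebraic}, and proper over $\cm$ by Lemma \ref{lem:proper}. In particular, since the coarse moduli space of $\cm$ is assumed proper (being projective) over $S$, the stack $\csnmbar$ is itself proper over $S$. The Keel--Mori theorem then produces a coarse moduli space $M$ of $\csnmbar$ as an algebraic space, together with a proper surjective map $q\colon \csnmbar \to M$ satisfying the usual universal property for maps into algebraic spaces.

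Next, applying universality to the composition $\csnmbar \to \cm \to |\cm|$ yields a morphism $f \colon M \to |\cm|$. The morphism $\csnmbar \to \cm$ is proper by Lemma \ref{lem:proper} and quasi-finite by Lemma \ref{lem:quasi-finite}. Since $q$ is proper and surjective, and since both properness and quasi-finiteness on geometric fibers descend along such maps (the fibers of $f$ over a geometric point are coarse moduli spaces of the corresponding quasi-finite and proper stacky fibers of $\csnmbar \to \cm$, hence are finite sets of points), the map $f$ is proper and quasi-finite as a morphism of algebraic spaces. A proper quasi-finite morphism of algebraic spaces is finite; moreover, an algebraic space finite over a scheme is a scheme, so $M$ is actually a scheme and $f$ is a finite morphism of schemes.

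Finally, since $|\cm|$ is projective over $S$ and $f$ is finite (in particular affine and proper), the pullback along $f$ of any $S$-ample line bundle on $|\cm|$ is $S$-ample on $M$, so $M$ is projective over $S$. The main subtlety worth spelling out in the proof is the descent of properness and quasi-finiteness from $\csnmbar \to \cm$ to $M \to |\cm|$; everything else is a direct application of Keel--Mori and standard facts about finite morphisms into projective schemes.
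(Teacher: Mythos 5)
Your proposal is correct and follows essentially the same route as the paper: apply Keel--Mori to obtain the coarse space, observe that the induced map to $|\cm|$ inherits properness and quasi-finiteness from $\csnmbar \to \cm$ (the paper invokes Lemma \ref{lem:proper} and Lemma \ref{lem:quasi-finite} exactly as you do), conclude that it is finite, and deduce projectivity over $S$. The only difference is that you spell out the descent of properness and quasi-finiteness to the coarse spaces and the fact that a finite algebraic space over a scheme is a scheme, whereas the paper takes these as known.
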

\begin{proof}
  It is well known that separated Deligne--Mumford stacks are coarsely represented by algebraic spaces (e.g. Corollary 1.3.1~\cite{keel-mori-quotient}). So we let $X = \coarse(\csnmbar)$ and $Y = \coarse(\cm)$ be these coarse moduli spaces with $f: X \to Y$ the natural map between them. 

  This map $f$ is proper because the corresponding map between the stacks is proper. Also $f$ is quasi-finite by Lemma~\ref{lem:quasi-finite}. Therefore $f$ is finite and hence projective. When $Y \to S$ is projective then so is $X \overset{f}{\to} Y \to S$.
\end{proof}

\subsection{Proof of Theorem~\ref{thm:intro_compactify}}\label{sec:proof_main}
We will first prove a general version of Theorem~\ref{thm:intro_compactify} below. To that end, let us recall all assumptions. Take $S \to \spec \Zz[1/2]$ to be an excellent scheme. Suppose that $\cm$ is a proper Deligne--Mumford stack over $S$ and $\cc \to \cm$ is a stable curve of genus $g\ge 2$. Further assume that the generic fiber of $\cc \to \cm$ over each irreducible component of $\cm$ is smooth. Take a line bundle $\cn$ on $\cc$ of absolutely bounded degree (Definition~\ref{def:bounded-degree}). 

\begin{theorem}\label{thm:main}
  With the conditions above the moduli space $\csnmbar$ is a smooth and proper Deligne--Mumford stack over the base scheme $S$. Furthermore, the inclusion $\csnm \toi \csnmbar$ is dense and open while the forgetful map $\csnmbar \to \cm$ is quasi-finite.
\end{theorem}
\begin{proof}
  The moduli space $\csnmbar$ is smooth by Theorem~\ref{thm:smooth} and Deligne--Mumford by Proposition~\ref{prop:dm_stack}. The inclusion $\csnm \toi \csnmbar$ is a dense open immersion by Lemma~\ref{lem:dense-open-immersion}. The map $\csnmbar \to \cm$ is proper by Lemma~\ref{lem:proper} and therefore $\csnmbar \to S$ is proper since $\cm \to S$ is assumed to be proper. Lemma~\ref{lem:quasi-finite} establishes the claim on quasi-finiteness.
\end{proof}

  Take $S=\spec \Zz[1/2]$ or one of its geometric points. Let $\cm = \mgnbar$ be the moduli space of marked stable curves and take the universal curve $\cgnbar \to \mgnbar$ over it with sections $\sigma_1,\dots,\sigma_n \colon \mgnbar \to \cgnbar$. Fix a tuple $(a_1,\dots,a_n)\in\Zz^n$ and define the line bundle $\cn$ to be the twisted relative dualizing sheaf $\omega_{\cgbar/\mgbar}(\sum_{i=1}^n a_i \sigma_i)$. With these conditions the moduli space $\csnmbar$ satisfies the hypotheses of Theorem~\ref{thm:main}. 

\begin{definition}\label{def:sgmbar}
  The moduli space of multiple-spin curves $\sgmbar$ is the space $\csnmbar$ where we are using the setup above with $n=0$ and $\cn=\omega_{\cc/\mg}$.
\end{definition}

Theorem~\ref{thm:intro_compactify} is a special case of Theorem~\ref{thm:main}.

\subsection{Normalization of the product}

Take $\cm = \mgbar$ with the universal curve $\cgbar \to \mgbar$ and take as the line bundle $\cn$ the relative dualizing sheaf $\omega_{\cgbar/\mgbar}$. The resulting moduli space $\csnmbar$ comes with a map $\csnmbar \to \sgtimesbar$ which forgets the synchronizations.

\begin{proposition}\label{prop:normalize}
  The moduli space $\sgtimesbar$ is not normal for $m \ge 2$ and the forgetful map $\csnmbar \to \sgtimesbar$ is the normalization.
\end{proposition}
\begin{proof}
  Cornalba \cite{cornalba} proves that there is a unique divisorial component of the ramification locus of the map $\sgbar \to \mgbar$ on each of the two components of $\sgbar$. The two components are defined by the parity of the spin structure which plays no role for the nature of our problem. 
  
  The general member of the ramification locus is a limit spin structure on an irreducible curve with a single node, with the spin structure acquiring a singularity at the node. Therefore, $\sgtimesbar$ has divisorial singularities precisely along the closure of the locus of curves whose generic member is an irreducible curve with a single node and at least two of the $m$ roots are singular at the node. For the rest of the proof we may assume $m=2$, the argument being similar for $m \ge 2$.

  Using Example 5.4.(b) of \cite{cornalba} we can describe the map $\sgtimesbarm{2} \to \mgbar$ locally around the non-normal locus, which in local coordinates is given by the map
  \begin{equation}
    k[[t_1,\dots,t_{3g-3}]] \to k[[\tau_{11},\tau_{12},t_2,\dots,t_{3g-3}]]/(\tau_{11}^2 -t_1, \tau_{12}^2-t_1).
  \end{equation}
  The normalization of the ring on the right is given by
  \begin{align}
    k[[\tau_{11},\tau_{12},t_2,\dots,t_{3g-3}]]/(\tau_{11}^2 -t_1, \tau_{12}^2-t_1) &\to \left( k[[\tau,t_2,\dots,t_{3g-3}]]/(\tau^2-t_1) \right)^{\times 2} \\
    \begin{bsmallmatrix}
      \tau_{11} \\
      \tau_{12} 
    \end{bsmallmatrix} &\mapsto 
    \begin{bsmallmatrix}
      (\tau,\tau) \\
      (\tau,-\tau)
    \end{bsmallmatrix}.\nonumber
  \end{align}
  The two components of the normalization correspond to the choice of parity (Definition~\ref{def:parity}) of the local sync data around the node. The corresponding formal neighbourhoods in $\csnmmbar{2}$ are given exactly as in the normalized components on the left. This follows from the description of the local deformation functors (Proposition~\ref{prop:local_csnmbar}) and the fact that a generic irreducible nodal curve has no automorphisms and, in this case, there are no inessential automorphisms of the roots that act non-trivially on the base of the local deformation functors (Proposition~\ref{prop:combinatorial_aut}). When there are $m \ge 2$ roots and $n \le m$ of them are singular around the node, we would get $2^{n-1}$ components from the normalization which agrees with the number of choices of parity.
\end{proof}

\section{Components of the moduli space of multiple-spin curves}\label{sec:components}

Let $\sg/k$ be the moduli space of spin curves, as described in the introduction, over an algebraically closed field $k$ of characteristic not $2$. In this section, we will classify the irreducible components of $\sgm = \sg \times_{\mg} \dots \times_{\mg} \sg$ for any $m \ge 1$. In doing so, we will also classify the irreducible components of its compactification $\sgmbar$ (Definition~\ref{def:sgmbar}). 

\begin{lemma}
  The connected components of $\sgm$ are irreducible. The number of (connected and irreducible) components of $\sgm$ does not depend on the field $k$. In fact, for any $k$ these components are in natural bijection with the components of $\sgm/\Cc$. These statements are also true for $\sgmbar$ in place of $\sgm$.
\end{lemma}
\begin{proof}
The inclusion $\sgm \toi \sgmbar$ is dense and open by Theorem~\ref{thm:intro_compactify} so that the irreducible components of $\sgm$ and $\sgmbar$ coincide. 

Theorem~\ref{thm:main} implies that we can define $\sgmbar$ over $\Zz[1/2]$, where it is still proper and smooth. Apply Theorem~4.17.(iii) of~\cite{deligne-mumford} to conclude that for any algebraically closed field $k$ of $\chr(k) \neq 2$, the number of connected components of $\sgmbar|_k$ is the same as the number of connected components of $\sgmbar|_\Cc$. The bijection of the components are attained by taking the closure of the components over $\sgmbar|_\Cc$. Alternatively, the classification of the components given in Section~\ref{sec:deformation_invariants} is characteristic independent and makes the bijection more explicit.

Finally, since the spaces in question are smooth, their connected components and irreducible components coincide. 
\end{proof}

We have thus reduced the problem to classifying the components over $k= \Cc$. We will take our base field to be $\Cc$ for the rest of this section. The components of $\sgm$ over $\Cc$ will be classified via the monodromy action on the fibers of $\sgm \to \mg$. 

\subsection{Deformation invariants of spin structures}\label{sec:deformation_invariants}

In this subsection we will describe a natural set of deformation invariants of an $m$-tuple of spin structures on a curve. The main theorem of this section states that these deformation invariants fully classify the components of $\sgt = \sg \times_{\mg} \dots \times_{\mg} \sg$. The proofs given in this subsection are outlines with proofs completed in the rest of this section.

Let $X$ be smooth proper genus $g$ curve over $\Cc$ and let $S_X$ denote the fiber of $\sg \to \mg$ over $X$, which we will view as the isomorphism classes of spin structures $(\eta,\alpha \colon \eta^{\otimes 2} \isoto \omega_{X/\Cc})$ on $X$. Up to isomorphisms, we can drop the map $\alpha$. In other words, the set $S_X$ is in bijection with the isomorphism classes of line bundles $\eta$ such that $\eta^{\otimes 2} \isoto \omega_C$. These isomorphism classes (and, by abuse of notation, the line bundles themselves) are called \emph{theta characteristics}. 

The only deformation invariant of a spin curve $(X,\eta)$ is the \emph{parity} of the theta characteristic $\eta$~\cite{mumford-theta-chars}, which is defined to be the parity of the dimension of the global sections of $\eta$. We will denote the parity by
\begin{equation}
 P(\eta) \colonequals h^0(\eta) \imod 2 \in \Ff_2.
\end{equation}
In other words, $\sg$ consists of two components and the component to which $(X,\eta)$ belongs is determined by the parity of $\eta$. 

Given three theta characteristics $\eta_1,\eta_2,\eta_3 \in S_X$, we can build a fourth one $\eta_1\otimes \eta_2 \otimes \eta_3 \otimes \omega_X^\vee = \eta_1 \otimes \eta_2 \otimes \eta_3^\vee$. 
In general, the parity of $\eta_i$'s alone do not determine the parity of $\eta_1\otimes \eta_2 \otimes \eta_3 \otimes \omega_X^\vee$ which, therefore, introduces another deformation invariant.

\begin{definition}\label{def:syzygy}
  For an  $m$-tuple $\eta_\bullet = (\eta_1,\dots,\eta_m) \in S_X^{m}$ we define the \emph{syzygy relations} $\syzy(\eta_\bullet)$ to be the parities of all $\eta_i$ and all $\eta_j \otimes \eta_k \otimes \eta_1^\vee$:
  \begin{equation}\label{eq:relation}
  \syzy(\eta_\bullet) \colonequals \left( P(\eta_i); P(\eta_j \otimes \eta_k \otimes \eta_1^\vee) \right)_{\substack{1 \le i \le m \\ 1 < j < k \le m}} \in \Ff_2^{m + {m-1 \choose 2}}. 
 \end{equation}
\end{definition}

Once we account for the relations between the theta characteristics, the syzygy relations will give a complete set of deformation invariants. Moreover, we will see that the syzygy relations will be a minimal set of deformation invariants when $g\ge m$ and when there are no relations between the theta characteristics.

\begin{definition}\label{def:eta_relations}
  A vector $c_\bullet \in \Ff_2^m$ with $\sum c_i = 0$ is a \emph{relation} for a tuple $\eta_\bullet \in S_X^{m}$ if
  \begin{equation}
    \eta_1^{c_1} \otimes \dots \otimes \eta_m^{c_m} \simeq \omega_X^{k} \quad \text{ for some } k \in \Zz.
  \end{equation}
  The set of relations $R(\eta_\bullet) \subset \Ff_2^m$ of $\eta_\bullet$ is a vector space. If $R(\eta_\bullet) = 0$ then $\eta_\bullet$ (as well as $(X,\eta_\bullet)$) will be called \emph{non-degenerate}.
\end{definition}

\begin{remark}
  Note that the condition $\sum c_i =0$ is redundant if $g\neq 1$ by degree considerations. Also the lift of $c_i \in \Ff_2$ to $\Zz$ in taking a power of $\eta_i$ influences only the value of $k$, but is otherwise immaterial.
\end{remark}

\begin{remark}
  If $c_\bullet$ is a relation for $\eta_\bullet$ and $c_i \neq 0$ then
  \begin{equation}
    \eta_i \simeq \omega_X^{k'} \bigotimes_{j \neq i} \eta_j^{c_j}
  \end{equation}
  for some $k'$. In other words, if $R(\eta_\bullet) \neq 0$ then $\eta_\bullet$ can be reconstructed from a non-degenerate sub-tuple of $\eta_\bullet$.
\end{remark}

A portion of the main result of this section can be stated as follows.

\begin{theorem}\label{thm:def_inv}
The pair $(R(\eta_\bullet),\syzy(\eta_\bullet))$ is a complete deformation invariant of $(X,\eta_\bullet)$. That is, two elements $(X,\eta_\bullet), (X',\eta_\bullet') \in \sgt$ lie in the same component of $\sgt$ if and only if $R(\eta_\bullet) = R(\eta_\bullet')$ and $\syzy(\eta_\bullet) = \syzy(\eta_\bullet')$. 
\end{theorem}

\begin{proof}
  The deformation invariance of $(R,\syzy)$ is Proposition~\ref{prop:def_inv}. In particular, we can deform $(X',\eta_\bullet')$ to some $(X,\eta_\bullet'')$ while keeping $(R,\syzy)$ constant. Now we use the monodromy action of $\sgm \to \mg$ over $X$ to argue that $(X,\eta_\bullet)$ and $(X,\eta_\bullet'')$ lie on the same component if and only if their space of relations and their syzygies coincide. The last step combines Section~\ref{sec:teich-spin} (Corollary~\ref{cor:monodromy-to-orbits}) and Section~\ref{sec:affine-geometry} (Corollary~\ref{cor:degenerate_orbit}). 
\end{proof}

The theorem above can be strengthened by describing exactly which syzygy relations can occur. We begin by shortening the syzygy relations by using a non-degenerate sub-tuple.

\begin{definition}\label{def:reduced_syzygy}
  Given $\eta_\bullet \in S_X^{m}$, there is a unique non-degenerate sub-tuple $\widetilde \eta_\bullet = (\eta_{i_1},\dots, \eta_{i_k})$ where $k = m-\dim R(\eta_\bullet)$ and $i_1 < \dots < i_k$ is the lexicographically smallest index set. Define the \emph{reduced syzygy relations} $\widetilde \syzy(\eta_\bullet)$ to be the syzygy relations $\syzy(\widetilde \eta_\bullet)$ of this sub-tuple.
\end{definition}

Let $\gr(a,m)$ be the Grassmannian of $a$-dimensional subspaces in $\Ff_2^m$. 

\begin{definition}
Given $R \in \gr(m-k,m)$ and $\xa \in \Ff_2^{k+{k-1 \choose 2}}$ define
\begin{equation}
  \sgra \colonequals \{(X,\eta_\bullet) \in \sgt \mid R(\eta_\bullet) = R,\, \widetilde \syzy(\eta_\bullet) = \xa\}.
\end{equation}
If $R=0$ then we denote $\mathcal{S}^{(0,\mathfrak{a})}_g$ by $\sga$ and call it a \emph{non-degenerate component} of $\sgt$. When $R \neq 0$, the component $\sgra$ is \emph{degenerate}. 
\end{definition}

For any $R \in \gr(m-k,m)$ and $\xa \in \Ff_2^{k+{k-1 \choose 2}}$ we can consider two spaces $\sga \subset \sgtm{k}$ and $\sgra \subset \sgt$. 
It is clear that there is an isomorphism $\sga \isoto \sgra$ which maps $(X,\eta_\bullet)$ to $(X, \omega_X^{k_\bullet} \bigotimes_j \eta_j^{c_{\bullet,j}})$ where $k_i \in \Zz$ and $c_{i,j} \in \Ff_2$ depend only on $R$. 
In particular, $\sgra$ is non-empty iff $\sga$ is non-empty, regardless of $R$.

As a consequence, $\widetilde\syzy(\eta_\bullet)$ determines $\syzy(\eta_\bullet)$ and Theorem~\ref{thm:def_inv} holds with the syzygy relations $\syzy$ replaced by reduced syzygy relations $\widetilde\syzy$. 
Hence, the components of $\sgt$ are indexed by a subset of 
\begin{equation}\label{eq:index_set}
  \bigcup_{k=0}^{m-1} \gr(m-k,m) \times \Ff_2^{k+{k-1 \choose 2}}.   
\end{equation}

\begin{definition}
Let $\Syzy_g^m \subset \Ff_2^{m + {m-1 \choose 2}}$ be the set of all \emph{admissible syzygy relations},
\begin{equation}
  \Syzy_g^m = \{\xa \in \Ff_2^{m + {m-1 \choose 2}} \mid \sga \neq \emptyset\}.
\end{equation}
\end{definition}

Since $\sgra \simeq \sga$, the decomposition of $\sgt$ into its (non-empty) irreducible components is given by
\begin{equation}
  \sgt =  \bigcup_{k=0}^{m-1} \,\, \bigcup_{R \in \gr(m-k,m)} \,\, \bigcup_{\xa \in \Syzy_g^k} \sgra.   
\end{equation}
To complete the classification of components of $\sgm$, it remains to describe the set $\Syzy_g^m$ for any~$m$. 

\begin{theorem}\label{thm:extremes}
  If $g\ge m$ then $\Syzy_g^m = \Ff_2^{m + {m-1 \choose 2}}$, i.e., $\forall \xa \in \Ff_2^{m + {m-1 \choose 2}}$ the space $\sga$ is non-empty. If $g < \frac{m-1}{2}$ then $\Syzy_g^m = \emptyset$. In general, for given $(\xa,g)$, Algorithm~\ref{alg:syzygy_relations} decides if $\sga$ is empty or not. 
\end{theorem}
\begin{proof}
  We use the equivalence of theta characteristics and quadratic forms (Section~\ref{sec:theta_quadratic}) so that the problem is about the existence of a non-degenerate $m$-tuple of quadratic forms with syzygy relations $\xa$ on a $2g$-dimensional non-singular symplectic $\Ff_2$-vector space. This problem is addressed in Section~\ref{sec:existence}, in particular by Lemma~\ref{lem:inclusion_to_syzygy} and Proposition~\ref{prop:isometric-immersion}. See the last paragraph of Section~\ref{sec:existence} to see Algorithm~\ref{alg:syzygy_relations} rephrased in that context.
\end{proof}

\begin{remark}
  The bounds in the theorem are optimal. If $g < m$, some $\sga$ will be empty. In fact, the sequence $\xa$ consisting solely of $1$s will give an empty component whenever $g<m$. If $g \ge \frac{m-1}{2}$ then some $\sga$ will be non-empty. 
\end{remark}

\begin{example}
  For $g \ge 2$, $\sg^{2}$ has $4$ irreducible components: $\sg^{(0,0)},\sg^{(1,0)}, \sg^{(0,1)}, \sg^{(1,1)}$. If $g=1$ then $\sg^{(1,1)}$ is empty. Indeed, an elliptic curve has only one odd ($P=1$) theta characteristic. 
\end{example}

\begin{example}
  For $g \ge 3$, $\sg^{3}$ has $16$ components of the form $\sg^{(a_1,a_2,a_3; a_{23})}$. If $g=2$ then $\sg^{(1,1,1;1)}$ is empty. 
To see this directly, take a genus 2 hyperelliptic curve $C$ with Weierstrass points $w_1,\dots,w_6$. Up to relabeling, any three distinct odd theta characteristics can be written as $\eta_i \equiv w_i$ for $i=1,2,3$. But then $\eta_1 \otimes \eta_2\otimes \eta_3\inv\equiv w_1+w_2+w_3 - g_2^1$ is an even ($P = 0$) theta characteristic. 
\end{example}

\begin{algorithm}\label{alg:syzygy_relations}
  Given $\xa = (a_i;a_{ij}) \in \Ff_2^{m + {m-1 \choose 2}}$ and $g \in \Nn$ this algorithm decides if $\sga$ is empty or not. For $i,j \in \{2,\dots,m\}$ let $e_{ii}=0$ and $e_{ij} = a_{ij} + a_i +a_j +a_1$ for $i\neq j$. Let $E = \left( e_{ij} \right)_{2 \le i,j \le m}$ and $K = \ker E \subset \Ff_2^{m-1}$. Consider the quadratic form $q = \sum_{2 \le i \le m} (a_i+a_1)x_i^2 + \sum_{2 \le i < j \le m} e_{ij}x_ix_j$ as a function on $\Ff_2^{m-1}$ with coordinates $x_2,\dots,x_m$. Let $U \subset \Ff_2^{m-1}$ be a space complementary to $K$, i.e., $U \oplus K = \Ff_2^{m-1}$. If $q|_K \not\equiv 0$ or if the Arf invariant of $q|_U$ is $a_1$ then let $c=0$, otherwise let $c=1$. The equality $2g \ge (m-1) + \dim K + 2c$ holds iff $\sga$ is non-empty.
\end{algorithm}

For small $(m,g)$ we can count the number of non-degenerate components $\sga \subset \sgt$ using this algorithm. Let $N(m,g)$ be the cardinality of $\Syzy_g^m$. Theorem~\ref{thm:extremes} implies that $N(m,\ge m) = 2^{m+{m-1 \choose 2}}$ and $N(2k+1,<k)=0$. The Table~\ref{tab:num_comps} lists the values of $N(m,g)$ for $m,g \le 7$ obtained by applying Algorithm~\ref{alg:syzygy_relations} to each possible $\xa$.

\begin{table}[h!]
  \centering
\begin{tabular}{c|cccccccc}
\textbf{$m\backslash g$} & 0 & 1 & 2 & 3 & 4 & 5 & 6 & 7 \\ 
\hline
1 &  \multicolumn{1}{c|}{1} &  2 & 2 & 2 & 2 & 2 & 2 & 2 \\ \cline{3-3} \cline{2-2}
2 &  \multicolumn{1}{c|}{0} & \multicolumn{1}{c|}{3} & 4 & 4 & 4 & 4 & 4 & 4 \\ \cline{4-4}
3 &  \multicolumn{1}{c|}{0} & 4 & \multicolumn{1}{c|}{15} & 16 & 16 & 16 & 16 & 16 \\ \cline{5-5} \cline{3-3}
4 &  0 & \multicolumn{1}{c|}{0} & 84 & \multicolumn{1}{c|}{127} & 128 & 128 & 128 & 128 \\ \cline{6-6}
5 &  0 & \multicolumn{1}{c|}{0} & 448 & 1876 & \multicolumn{1}{c|}{2047} & 2048 & 2048 & 2048 \\ \cline{7-7} \cline{4-4}
6 &  0 & 0 & \multicolumn{1}{c|}{0} & 41664  & 64852 & \multicolumn{1}{c|}{65535} & 65536 & 65536 \\ \cline{8-8}
7 & 0 & 0 & \multicolumn{1}{c|}{0} & 888832 & 3819200 & 4191572 & \multicolumn{1}{c|}{4194303} & 4194304 
\end{tabular}
\caption{The number $N(m,g)$ of non-degenerate components of $\sgt$. If $g \ge m$ then $N(m,g) = 2^{m+{m-1 \choose 2}}$.  
If $2g < m-1$ then $N(m,g)=0$.}
  \label{tab:num_comps}
\end{table}

\begin{remark}
  It is easy to prove using Section~\ref{sec:existence} that $N(m,m-1) = N(m,m)-1$. The other extreme is also easy to describe: $N(2k+1,k)$ counts the number of non-singular quadratic forms on $\Ff_2^{2k}$. We can evaluate this number by summing the orbit sizes of an even and odd quadratic form: 
  \begin{equation}
    N(2k+1,k) = \frac{\#\gl(\Ff_2^{2k})}{\#O^+(2k)} + \frac{\#\gl(\Ff_2^{2k})}{\#O^-(2k)}, 
  \end{equation}
  where $O^{\pm}(2k) \subset \gl(\Ff_2^{2k})$ is the orthogonal group preserving a non-singular even ($+$) or odd ($-$) quadratic form. The cardinality of these groups are well known~\cite{kleidman90}:
  \begin{equation*}
    \#\gl(\Ff_2^{2k}) = \prod_{j=0}^{2k-1} \left(2^{2k} - 2^j \right),\quad 
    \#O^{\pm}(2k) = 2^{k^2-k+1} \cdot \left( 2^k \mp 1 \right) \prod_{j=1}^{k-1} \left( 2^{2j}-1 \right).  
  \end{equation*}
\end{remark}

\subsection{Theta characteristics and quadratic forms}\label{sec:theta_quadratic}

This section translates the set-up from the previous section on theta characteristics into the language of quadratic forms, which will be more convenient for the proofs. See~\cite{gross-harris--geometric_constructions} for our point of view and a survey of the connection between quadratic forms, theta characteristics and Arf invariants.

We recall our notation from the previous section: $X/\Cc$ is a smooth proper curve of genus $g \ge 0$ and $S_X$ is the set of theta characteristics on $X$, i.e., isomorphism classes of line bundles $\eta$ on such that $\eta^{\otimes 2} \simeq \omega_X$.

Let $V_X$ be the set of $2$-torsion line bundles of $X$ up to isomorphism, i.e., for $\varepsilon \in V_X$ we have $\varepsilon^{\otimes 2} \simeq \co_X$. We will make the identification $V_X = \H^1(X,\Ff_2)$ where the right hand side uses Betti cohomology. This identification comes from viewing $\pic^0_X$ as the quotient $\H^1(X,\co_X)/\H^1(X,\Zz)$ from which it follows that the $2$-torsion elements are $\H^1(X,\Ff_2)\simeq\H^1(X,\frac{1}{2}\Zz)/\H^1(X,\Zz)$. Note that $V_X \simeq \Ff_2^{2g}$ and $S_X$ is a $V_X$ torsor.

Using the intersection product on Betti cohomology (or equivalently the Weil pairing on $2$-torsion line bundles~\cite[p.282]{acgh}) we see that $V_X$ is a non-singular symplectic vector space: $\forall v \in V_X$, $v \cdot v = 0$ and $\forall v \in V_X,\, \exists u \in V_X$ such that $u \cdot v \neq 0$. Let $\spin(V_X)$ be the symplectic linear group acting on $V_X$. 

Denote by $Q_X \subset \sym^2 V_X^\vee$ the set of quadratic forms $V_X \to \Ff_2$ associated to the inner product on $V_X$. In characteristic $2$ this means that for any $q \in Q_X$ we have
\begin{equation}\label{eq:bilin}
  \forall u,v \in V_X, \quad q(u+v) + q(u) + q(v) = u \cdot v.
\end{equation}
The set $Q_X$ is an affine space (i.e., torsor) over $V_X$. Given $v \in V_X$ and $q \in Q_X$ we write $q+v$ for the quadratic form defined by $(q+v)(u) = q(u) + v\cdot u$. Conversely, given two quadratic forms $q,q' \in Q_X$ we denote by $q-q'$ the element of $V_X$ translating $q'$ to $q$. The only $\spin(V_X)$-invariant of a quadratic form $q \in Q_X$ is its Arf invariant $\arf(q) \in \Ff_2$, see~\cite{arf-invariant}. The value $\arf(q)$ coincides with the value which $q$ attains most frequently on $V_X$.   

The spaces $Q_X$ and $S_X$ are canonically identified, see~\cite[p.290]{acgh} and~\cite{mumford-theta-chars}. We will denote this identification by $S_X \isoto V_X \colon \eta \mapsto q_\eta$ where $q_\eta$ is defined by
\begin{equation}\label{eq:qeta}
 \forall v \in V_X, \quad  q_\eta(v) = h^0(\eta \otimes v) - h^0(\eta) \imod 2.
\end{equation}
The Arf invariant of the quadratic form $q_\eta$ coincides with the parity of $\eta$, \cite{mumford-theta-chars},
\begin{equation}
  \arf(q_\eta) = h^0(\eta) \imod 2 = P(\eta).
\end{equation}

As we are in characteristic $2$, we have $q_i-q_j = q_j -q_i$ so we define $q_i + q_j \colonequals q_i - q_j = q_j - q_i$. It is standard that with this operation $V_X \oplus Q_X$ becomes a vector space with $\Ff_2$-grading. 

\begin{definition}\label{def:Rq}
  A relation of $q_\bullet$ is an element $c_\bullet \in \Ff_2^m$ satisfying $\sum c_j q_j = 0$. The $\Ff_2$-grading implies that $\sum c_j = 0$. We denote the subspace of relations of $q_\bullet$ by $R(q_\bullet) \subset \Ff_2^m$. When $R(q_\bullet)=0$ we say $q_\bullet$ is non-degenerate. 
\end{definition}

Let $\eta_\bullet \in S_X^{m}$ be an $m$-tuple of theta characteristics and $q_\bullet \in Q_X^{m}$ be the corresponding sequence of quadratic forms ($q_i \colonequals q_{\eta_i}$).

\begin{lemma}\label{lem:Rq_is_Reta}
 The space of relations for $\eta_\bullet$ and $q_\bullet$ coincide, i.e., $R(q_\bullet) = R(\eta_\bullet) \subset \Ff_2^m$.
\end{lemma}
\begin{proof}
Using~\eqref{eq:qeta} and~\eqref{eq:bilin} it can be checked that $q_{i} + q_{j} = q_{\eta_i \otimes \eta_j \otimes \omega_X^\vee}$. It is now clear that the relations of $\eta_\bullet$ as defined in Definition~\ref{def:eta_relations} coincides with the relations of $q_\bullet$.
\end{proof}

\begin{remark}
Using $q_i + q_j = q_{\eta_i \otimes \eta_j \otimes \omega_X^\vee}$ we find 
 $ q_{1} + q_{2} + q_{3} = q_{\eta_1\otimes \eta_2 \otimes \eta_3 \otimes \omega_X^\vee}$.
Therefore, the syzygy relations of a sequence of theta characteristics on $X$ (Definition~\ref{def:syzygy}) and the syzygy relations of the corresponding quadratic forms (Definition~\ref{def:xa}) coincide.
\end{remark}

\begin{proposition}\label{prop:def_inv}
  For any deformation of $(X,\eta_\bullet)$ the space of relations $R(\eta_\bullet) \subset \Ff_2^m$ and the syzygy relations $\syzy(\eta_\bullet)$ remains unchanged. 
\end{proposition}
\begin{proof}
  As $(X,\eta_\bullet)$ varies in a smooth family, the space $V_X$ remains analytic locally constant (Ehresmann's fibration theorem) and, therefore, so does $Q_X$. As the parity of a theta characteristic is a deformation invariant~\cite{mumford-theta-chars} the identification of $\eta_\bullet$ with quadratic forms $q_\bullet$ can be done in families using~\eqref{eq:qeta}. In particular, the syzygy relations remain constant during deformations.

  As $V_X$ and $Q_X$ are discrete, the sections corresponding to $\eta_\bullet$ and $q_\bullet$ are locally constant. Then the space of relations $R(q_\bullet)$ is locally constant. But the ambient space $\Ff_2^m$ of $R(q_\bullet)$ is globally constant and then so is $R(q_\bullet)$. Now use Lemma~\ref{lem:Rq_is_Reta}. 
\end{proof}

\subsection{Monodromy action on theta characteristics}\label{sec:teich-spin}

We can deduce the connected components of $\sgt$ by studying the monodromy action of the finite cover $\pi \colon \sgt \to \mg$. The references for the standard results in this section are~\cite{earle-fowler--families_of_riemann_surfaces},~\cite{groth--teich} and~\cite[Appendix B]{acgh}. 

Let $X$ be a smooth proper complex curve of genus $g \ge 1$, $S_X$ the fiber of $\pi$ over $X$, $V_X = \H^1(X,\Ff_2) \simeq \Ff_2^{2g}$, and $Q_X$ is the affine space of quadratic forms on $V_X$ compatible with the symplectic intersection product.

We recall that the orbifold fundamental group of $\mg$ is isomorphic to the mapping class group $\Gamma_g$. The monodromy action of the mapping class group $\Gamma_g$ on $V_X = \H^1(X,\Ff_2)$ preserves the intersection product and induces a \emph{surjective} map $s\colon \Gamma_g \tos \spin(V_X)$. Let $\spin(V_X)$ act on $Q_X$ from the left by inverse precomposition, that is $\spin(V_X) \times Q_X \to Q_X : (\gamma,q) \mapsto q \circ \gamma\inv$. The following statement appears to be well known~\cite{sipe--roots-of-canonical} and, in any case, is not hard to prove (see~\cite[p.294]{acgh}).

\begin{proposition}\label{prop:monodromy}
  The monodromy action of $\Gamma_g$ on $S_X$ factors through the natural surjective map $s\colon \Gamma_g \tos \spin(V_X)$. With respect to the identification $S_X \mapsto Q_X \colon \eta \mapsto q_\eta$, the monodromy action on $S_X$ coincides with the precomposition action of $\spin(V_X)$ on $Q_X$. More precisely, $\forall u \in \Gamma_g,\, \forall \eta \in S_X$ we have $q_{u \cdot \eta} = q_\eta \circ s(u)\inv$.
\end{proposition}

Combining Proposition~\ref{prop:monodromy} with the irreducibility of $\mg$ we can determine the irreducible components of the $m$-fold product $\sgt = \sg \times_{\mg} \dots \times_{\mg} \sg$. 

\begin{corollary}\label{cor:monodromy-to-orbits}
  The irreducible (and connected) components of $\sgt$ are in bijection with the $\spin(V_X)$-orbits of $Q_X^{m} = Q_X \times \dots \times Q_X$ acting by precomposition, where $\gamma \in \spin(V_X)$ maps $(q_1,\dots,q_m)$ to $(q_1\circ \gamma\inv, \dots, q_m\circ \gamma\inv)$.
\end{corollary}

\begin{proof}
  The morphism $\pi \colon \sgt \to \mg$ is an orbifold finite cover. Therefore, the components of $\sgt$ are in bijection with the orbits of any fiber of $\pi$ under the monodromy action. Proposition~\ref{prop:monodromy} states that the monodromy action of $\pi_1(\mg,X) \simeq \Gamma_g$ on $S_X \simeq Q_X$ factors through a surjective map $\Gamma_g \tos \spin(V_X)$ followed by the precomposition action of $\spin(V_X)$ on $Q_X$.
\end{proof}

\subsection{Affine geometry of quadratic forms}\label{sec:affine-geometry}

The goal of this subsection is to classify the orbits of $m$-tuples of quadratic forms on a symplectic $\Ff_2$-vector space under the action of the symplectic group. Via Corollary~\ref{cor:monodromy-to-orbits} this will allow us to classify the components of $\sgt$.  The standard text book~\cite{artin--geometric_algebra} offers a comprehensive treatment of quadratic spaces in characteristic not $2$, the basic treatment in characteristic $2$ is similar~\cite{knebusch2010}. 

Let $V$ be a $2g$-dimensional vector space over $\Ff_2$ with a non-singular symplectic inner product $f\colon V\times V \to \Ff_2$. Let $Q$ be the set of quadratic forms on $V$ associated to the inner product $f$.  The set $Q$ is an affine space with space of translations $V$. The translation action is given by $V \times Q \to V\colon (v,q) \mapsto q + f(v,\cdot)$. We will simply write the latter quadratic form as $q+v$. Moreover, if $q,q' \in Q$ are such that $q' = q +v$ then we will express $v$ as $q'-q$, or as $q'+q$ since the characteristic is 2. 

Let $\spin(V)\colonequals \spin(V,f)$ be the symplectic group of $V$ preserving $f$. There is a natural action of $\spin(V)$ on $Q$ via precomposition. In other words, there is a pairing $\spin(V)\times Q \to Q : (\gamma,q) \mapsto \gamma_*q = q\circ \gamma\inv$. Notice that $\gamma_*(q+v)=\gamma_*q + \gamma(v)$ since $f$ is invariant under $\gamma$. The Arf invariant $\arf \colon Q \to \Ff_2$ is the only invariant under the action of $\spin(V)$~\cite{arf-invariant}. 

To classify the $\spin(V)$-orbits of $Q^{m} = Q \times \dots \times Q$ under precomposition we could remove the diagonals from $Q^{m}$ and use induction, since the orbits contained in a diagonal correspond to orbits in $Q^{m-1}$. However, we can do better: the affine linear structure on $Q$ allows us to reduce the number of elements in a tuple of quadratic forms to an affine linearly independent subset. As a consequence, we need only consider affine linearly independent tuples from $Q^m$.

\begin{definition}\label{def:degenerate-tuple}
  A sequence $q_\bullet = (q_1,\dots,q_m)$ of quadratic forms on $V$ is called \emph{non-degenerate} if the affine span generated by $q_\bullet$ in $Q$ is of dimension $m-1$. (This definition agrees with the one in Definition~\ref{def:Rq}.)
\end{definition}

For non-degenerate tuples, we will see that the ``syzygy relations'' as defined below classify the $\spin(V)$-orbits.

\begin{definition}\label{def:xa}
  Given a sequence of quadratic forms $q_\bullet=(q_1,\dots,q_m)$, let the \emph{syzygy relation of $q_\bullet$} to be the tuple 
  \begin{equation}
    \syzy(q_\bullet) \colonequals ( \arf(q_i) ; \arf(q_i + q_j + q_1))_{\substack{1 \le i \le m \\ 1 < j < k \le m}} \in \Ff_2^{m + {m-1 \choose 2}}. 
  \end{equation}
\end{definition}

\begin{notation}\label{not:Wq}
Given $q_\bullet \in Q^{m}$ define $W(q_\bullet) = (W,q)$ where $W$ is obtained by translating the affine span of $q_\bullet$ by $q_1$ and the quadratic form $q$ is the restriction of $q_1$ to $W \subset V$. The obvious generating vectors for $W$ will be denoted by $v_i = q_i+q_1$ for $i=2,\dots,m$. 
Let $e_i = q(v_i)$, $e_{ij} = f(v_i,v_j)$. 
\end{notation}

\begin{lemma}\label{lem:xe}
 If $\syzy(q_\bullet) = (a_i;a_{ij})$ then $q(v_i) = a_i + a_1$ and $f(v_i,v_j) = a_{ij}+a_i + a_j + a_1$. 
\end{lemma}
\begin{proof}
  There is a relation between the Arf invariant of a quadratic form and the Arf invariant of its translate, which give 
  \begin{equation}
    \arf(q_{i}) = \arf(q_1+v_i) = \arf(q_1) + q_1(v_i).
  \end{equation}
  Hence we deduce $q(v_i)= a_1+a_{i}$. To evaluate $f(v_i,v_j)$ we combine the relation above and the bilinearity relation as in~\eqref{eq:bilin}.
\end{proof}

\begin{witt}[Theorem~3.3 and Exercise~3.31 in~\cite{wilson--finite_simple_groups}] \label{lem:witt}
  Let $(V,q)$ and $(V',q')$ be isometric non-singular quadratic spaces. Let $W \subset V$ and $W' \subset V'$ be subspaces and $\mu\colon (W,q|_W) \isoto (W',q'|_{W'})$ be an isometry. Then, there is an isometry $\gamma \colon  (V,q) \to (V',q')$ such that $\gamma|_W = \mu$.
\end{witt}

\begin{proposition}\label{prop:orbit-of-quadratics}
  Two sequences of non-degenerate quadratic forms on $V$ are in the same $\spin(V)$-orbit if and only if their syzygy relations are equal.
\end{proposition}

\begin{proof}
  Let $q_\bullet, q'_\bullet \in Q^m$ be non-degenerate. 
   Since $\spin(V)$ preserves the Arf invariant, if there is a $\gamma \in \spin(V)$ such that $\gamma_*q_\bullet = q'_\bullet$ then the associated syzygy relations are equal, that is, $\syzy(q_\bullet)=\syzy(q'_\bullet)$. 

   Conversely, suppose that $\syzy(q_\bullet)=\syzy(q_\bullet')$. Since, $\arf(q_1)=\arf(q_1')$ the non-singular quadratic spaces $(V,q_1)$ and $(V,q_1')$ are isometric. Let $W(q_\bullet) = (W,q)$ and $W(q_\bullet') = (W',q')$ be given as in Notation~\ref{not:Wq}. The obvious generating elements form a basis since $q_\bullet$ and $q_\bullet'$ are non-degenerate. We denote these bases by $W = \langle v_2,\dots,v_m \rangle$ and $W' = \langle v_2',\dots,v_m' \rangle$. 

   Define a linear map $\mu\colon W \to W' : v_i \mapsto v'_i$. Lemma~\ref{lem:xe} and the equality of the syzygy relations implies that $q(v_i)=q'(v'_i)$ and $f(v_i,v_j) = f(v_i',v_j')$. Therefore, $\mu\colon(W,q_1|_{W}) \to (W',q'_1|_{W'})$ is an isometry.

Applying Witt's Lemma above we conclude that there is an isometry $\gamma\colon(V,q_1) \to (V,q_1')$ extending $\mu$. Necessarily $\gamma \in \spin(V)$ and $\gamma_*q_1 = q_1'$. Furthermore, $\gamma_*(q_i) = \gamma_*(q_1 + v_i) = \gamma_*(q_1) + \gamma(v_i) = q_i'$. Thus $\gamma_*q_\bullet = q_\bullet'$.
\end{proof}

Let $q_\bullet \in Q^m$ be a possibly degenerate $m$-tuple. We will use the space of relations $R(q_\bullet) \subset \Ff_2^m$ from Definition~\ref{def:Rq} and the obvious adaptation of the reduced syzygy relations $\widetilde\syzy(q_\bullet)$ from Definition~\ref{def:reduced_syzygy}. The following follows immediately from Proposition~\ref{prop:orbit-of-quadratics}.

\begin{corollary}\label{cor:degenerate_orbit}
  Two tuples $q_\bullet, q'_\bullet \in Q^m$ are in the same $\spin(V)$-orbit if and only if $R(q_\bullet) = R(q'_\bullet)$ and $\widetilde\syzy(q_\bullet) = \widetilde\syzy(q'_\bullet)$.
\end{corollary}

\subsection{Prescribing syzygy relations}\label{sec:existence}

In this section we will give an algorithm to decide if given $\xa = (a_i;a_{ij}) \in \Ff_2^{m+{m-1 \choose 2}}$ there is a non-degenerate sequence $q_\bullet \in Q^m$ with $\syzy(q_\bullet) = \xa$. 

With Notation~\ref{not:Wq} and Lemma~\ref{lem:xe} in mind we define what would be an abstract copy of $W(q_\bullet)$ if non-degenerate $q_\bullet$ corresponding to $\xa$ exists.

\begin{notation}\label{not:wxa}
  Let $(W_\xa,q_\xa)$ be such that $W_\xa=\Ff_2^{m-1}$ with basis $v_2,\dots,v_m$ and dual basis $x_2,\dots,x_m$. Let 
  \begin{equation}\label{eq:qxa}
  q_\xa = \sum_{i=2}^m e_ix_i^2 + \sum_{2 \le i < j \le m} e_{ij} x_ix_j,
\end{equation}
where $e_i=a_i+a_1$ and $e_{ij} = a_{ij} + a_i + a_j + a_1$. 
\end{notation}

\begin{lemma}\label{lem:inclusion_to_syzygy}
  There exists a non-degenerate sequence $q_\bullet$ of quadratic forms on $V$ with syzygy relations $\xa$ if and only there is a quadratic form $q_V$ on $V$ with $\arf(q_V)=a_1$ and an isometric immersion $\iota\colon(W_\xa,q_\xa) \toi (V,q_V)$.
\end{lemma}
\begin{proof}
  If there is an immersion $\iota$, let $q_1=q_V$ and $q_i = q_1 + \iota(v_i)$ for $i=2,\dots,m$. Then $q_\bullet=(q_1,\dots,q_m)$ is non-degenerate with syzygy relations $\xa$. Conversely, given $q_\bullet = (q_1,\dots,q_m)$ let $q_V = q_1$ and consider the subspace $W(q_\bullet)$ of $V$ as defined in Notation~\ref{not:Wq}. By design $(W_\xa,q_\xa)$ is isomorphic to $W(q_\bullet)$, giving the map $\iota$. 
\end{proof}

We now give an easy criterion to check for the existence of immersions of the kind required by Lemma~\ref{lem:inclusion_to_syzygy}. Let $(V,q_V)$ and $(W,q_W)$ be quadratic spaces with $V$ non-singular symplectic and $W$ possibly singular symplectic. The intersection pairing on $W$ defines a linear map $W \to W^\vee$ with kernel $K$. Choose a subspace $U \subset W$ complementary to $K$, i.e., $W = K \oplus U$. Note that the intersection pairing on $U$ is non-degenerate and the value of $\arf(q_W|_U)$ does not depend on our choice of $U$. 

\begin{notation}\label{not:correction-term}
  For $a\in \Ff_2$ let $c(q_W,a) \in \set{0,1}$ be the \emph{correction term} defined as follows:
  \[
    c(q_W,a) = \left\{ \begin{array}[]{ccc}
      0 &:&   q_W|_K \not\equiv 0  \\
      0 &:&   q_W|_K \equiv 0,\,\arf(q_W|_U) = a \\
      1 &:&   q_W|_K \equiv 0,\,\arf(q_W|_U) \neq a 
    \end{array} \right.
  \]
\end{notation}

\begin{remark}
  In evaluating the correction term for $K=0$ or $U=0$, use the convention that the quadratic form on the zero space is zero and has Arf invariant zero.
\end{remark}

\begin{proposition}\label{prop:isometric-immersion}
  There is an isometric immersion $(W,q_W) \toi (V,q_V)$ iff the following inequality is satisfied:
  \begin{equation}\label{eq:inequality}
    \dim V \ge \dim W + \dim K +2c(q_W,\arf(q_V)).
  \end{equation}
\end{proposition}
\begin{proof}
  Recall our decomposition $W = K \oplus U$. Let $K'$ be an isomorphic copy of $K$ and define $W' = W \oplus K'$. Extend the symplectic pairing on $W$ to $W'$ so that $K'$ is orthogonal to $U$ and is dual to $K$. Naturally, $W'$ is a non-singular symplectic space and any embedding $W \toi V$ will extend to an embedding $W' \toi V$. This forces the inequality $\dim V \ge \dim W + \dim K$.

  Pick a basis $\gamma_1,\dots,\gamma_s$ of $K$ and its dual basis $\gamma_1',\dots,\gamma_s' \in K'$. Any extension of the quadratic form $q_W$ to a quadratic form $q_{W'}$ on $W'$ requires only the values $q_{W'}(\gamma_i')$ for $i=1,\dots,s$. Using the obvious hyperbolic decomposition of $K\oplus K'$ defined by our choice of bases, we note that:
  \[
    \arf(q_{W'}) = \arf(q_W|_U) + \sum_{i=1}^{s}q_{W}(\gamma_i)q_{W'}(\gamma_i').  
  \]
  If $q_W|_K \not\equiv 0$ then we can choose an extension $q_{W'}$ of either Arf invariant. However, if $q_W|_K \equiv 0$ then any extension $q_{W'}$ will necessarily have $\arf(q_{W'})= \arf(q_W|_U)$. To change the parity, we would have to join an odd plane to $W'$, increasing the dimension by 2 and obtaining, say $(W'',q_{W''})$. This forces the refined inequality $\dim V \ge \dim W + \dim K + 2c(q_W,\arf(q_V))$.

  However, this inequality is also sufficient. Construct $(W',q_{W'})$ (or the larger $(W'',q_{W''})$ if necessary) such that $\arf(q_{W'}) = \arf(q_V)$ (or $\arf(q_{W''})=\arf(q_V)$). Using the hyperbolic decompositions of $V$ and $W'$ (or $W''$) it is clear that we can find an embedding $W' \toi V$ (or $W'' \toi V$).
\end{proof}

\begin{corollary}\label{cor:isometric-immersion}
  For any $\xa \in \Ff_2^{m+{m-1 \choose 2}}$ there exists a non-degenerate sequence $q_\bullet=(q_1,\dots,q_m)$ of quadratic forms on $V$ with syzygy relations $\xa$ if 
\begin{equation}  \label{eq:g_m}
  g \ge  m, \quad\quad \text{where } g = \frac{\dim V}{2}.
\end{equation}
\end{corollary}
\begin{proof}
  Observe that $m-1 = \dim W_\xa \ge \dim K $ and $1 \ge c(q_\xa,\bullet)$ so that the right hand side of~\eqref{eq:inequality} is at most $2m$. Since $\dim V \ge 2m$, Proposition~\ref{prop:isometric-immersion} implies the existence of an isometric immersion $(W_\xa,q_\xa) \toi (V,q_V)$. We conclude by Lemma~\ref{lem:inclusion_to_syzygy}.
\end{proof}

\begin{remark}\label{rem:optimal}
  Note that the inequality~\eqref{eq:g_m} is optimal. Indeed, if $g < m$ then the tuple $\xa$ consisting entirely of $1$s does not constitute a syzygy relation. In this case, the intersection product on $W_\xa$ is identically zero. Therefore, $K=W$, $U=0$, and $\arf(q_\xa|_U)=0 \neq 1 = \arf(q_V)$. The inequality~\eqref{eq:inequality} is then violated.
\end{remark}

In general, we can decide if $\xa$ is the syzygy relation of a non-degenerate $m$-tuple of quadratic forms on a $2g$-dimensional non-singular symplectic space as follows. The tuple $\xa$ allows us to construct $q_\xa$ as in~\eqref{eq:qxa}. The bilinear pairing on $W_\xa = \Ff_2^{m-1}$ induced by $q_\xa$ is given by the coefficients $e_{ij}$ of $x_ix_j$ ($i \neq j$). In the context of Lemma~\ref{lem:inclusion_to_syzygy} we can thus evaluate the correction term $c(q_\xa,a_1)$ to check if the inequality in Proposition~\ref{prop:isometric-immersion} holds. In this setting, this inequality specializes to
\begin{equation}
  2g \ge (m-1) + \dim K + c(q_\xa,a_1).
\end{equation}
The inequality holds if and only if the syzygy relations are realized. This is Algorithm~\ref{alg:syzygy_relations}.

\appendix
\section{The universal deformation of a rooted node}\label{app:local_deformations}

In this section we describe the universal deformation of a node together with a root of a line bundle, that is, of a \emph{rooted node}. This amounts to bringing together the results available in literature, specifically~\cite{faltings-bundles} and~\cite{jarvis-torsion-free}. Faltings~\cite{faltings-bundles} studies, essentially, square roots of vector bundles whereas Jarvis~\cite{jarvis-torsion-free} studies $r$-th roots of line bundles. We are interested in the intersection of the two, the square roots of line bundles. Because of its simplicity, a treatment of this special case is quite revealing.

\subsection{Conventions}

In this section we are solely concerned with local, or inifinitesimal, deformation functors. Therefore, at times, we will simply refer to these as deformation functors. Infinitesimal deformations of an affine scheme are always affine~\cite{sernesi-deformation} and therefore we will work in the dual category of algebras instead of schemes. 

For the rest of this section $k$ is an algebraically closed field of characteristic $\neq 2$ and  $\Lambda$ is a complete noetherian local ring with residue field $k$. The category of Artinian local $\Lambda$-algebras with residue field $k$ is denoted by $\art_\Lambda$. We denote by $\hart_\Lambda$ the category of complete noetherian local $\Lambda$-algebras $(R,\xm_R)$ such that for each $n \ge 1$ the quotient $R/\xm_R^n$ belongs to $\art_\Lambda$. Every $R \in \hart_\Lambda$ comes equipped with a natural map $R \to k = R/\xm_R$.

A functor $F\colon \art_\Lambda \to \Sets$ is said to be \emph{pro-represented} by $R \in \hart_\Lambda$ if $F$ is isomorphic to the restriction of the functor $\hom_{\hart_{\Lambda}}(R,\cdot)$ to $\art_{\Lambda}$, see~\cite{schlessinger} for more details. 

We will use equality between two objects $X=Y$ to mean that there exists a unique isomorphism between~$X$ and~$Y$.

\subsection{Deformations of a node}

\begin{definition}\label{def:std_node}
  The $k$-algebra $\bar A \colonequals  k[[x,y]]/(xy)$ is called \emph{the standard node}. A tuple $(A/ R , \iota\colon A \to \bar A)$ where $A$ is a complete local flat $R$-algebra and $\iota$ factors through an isomorphism $A\otimes_R k \to \bar A$ is called \emph{a deformation of the node over $R$}. Two deformations $(A/ R,\iota)$ and $(A' / R,\iota')$ are isomorphic if there is an $R$-isomorphism of $A$ and $A'$ commuting with the maps $\iota$ and $\iota'$.
\end{definition}

\begin{definition}\label{def:G}
  The \emph{functor of infinitesimal deformations of the node} is the functor $G \colon \art_\Lambda \to \Sets$ which maps $R$ to the set of isomorphism classes of deformations of the node over $R$. 
\end{definition}

\noindent The following theorem is folklore. See \stacks{0CBX} for the idea of proof.

\begin{theorem} \label{thm:univ-def-of-node}
  The deformation $(\Lambda[[x,y,t]]/(xy -t) / \Lambda[[t]],j\colon t \mapsto 0)$ is universal. That is, $\Lambda[[\tau]]$ pro-represents $G$, which for any deformation $(A / R,\iota) \in G(R)$ defines a unique map $\Lambda[[t]] \to R$ giving rise to an isomorphism $A \simeq R[[x,y]]/(xy-\pi)$.
\end{theorem}

\subsection{Deformations of a root}

\begin{definition}
  Let $(A/ R,\iota)$ be a deformation of the node and let $E$ be an $R$-flat and $R$-relatively torsion-free rank-1 $A$-module. Suppose $b\colon E^{\otimes 2} \to A$ is a non-degenerate bilinear form (Definition~\ref{def:bilinear}). Then the pair $(E,b)$ will be called a \emph{root}. If $E$ is a free $A$-module, and hence $E \simeq A$, then $(E,b)$ is called a \emph{free root}. An isomorphism between two roots $(E,b)$ and $(E',b')$ is an isomorphism $\mu \colon E \to E'$ such that $b' \circ \mu^{2} = b$. 
\end{definition}

\begin{notation}\label{not:iota_push}
  For a root $(E,b)$ on $(A / R,\iota)$ we will write $\iota_* E$ for $E \otimes_A \bar A$ and $\iota_*b$ for the map $\iota_*E^{2} \to \bar A$ induced from $b$.
\end{notation}

\begin{definition}
  Let $(\bar E, \bar b)$ be a root on the standard node and let $(E,b)$ be a root on a deformation of the node $(A / R, \iota)$. An isomorphism $j \colon  \iota_* E \iso \bar E$ such that $\bar b \circ j^{2} = \iota_* b$ is called a \emph{restriction map}. The tuple $(E,b,j)$ is a \emph{deformation of the root $(\bar E, \bar b)$}. An isomorphism of deformations is an isomorphism of roots commuting with the restriction maps.
\end{definition}

\begin{definition}
  The standard node together with a root $(\bar A / k, \bar E,\bar s)$ will be called a \emph{rooted node}. A deformation of the standard node together with a deformation of the root $(\bar E, \bar s)$ is a tuple $(A / R, \iota , (E, b, j))$, which will be called a \emph{deformation of the rooted node over $R$}. Isomorphisms are defined in the obvious way.
\end{definition}

\begin{lemma} \label{lem:free_roots_deform_trivially}
  If $(\bar E,\bar s)$ is a free root on $\bar A / k$, then for any $R \in \hart_{\Lambda}$ there exists a unique deformation of $(\bar A/ k,\bar E, \bar s)$ over $R$, up to unique isomorphisms.
\end{lemma}

\begin{proof}
  Since $A$ is complete with respect to $\xm_R\cdot A$ we just have to show that there exists a unique lift of a free root from $R/\xm_R^n$ to $R/\xm_R^{n+1}$. Existence of the lift is clear. In order to conclude that there exists a \emph{unique} isomorphisms between any two lifts, we observe that any two lifts of square roots of an invertible element in $R/\xm_R^n$ are equal.
\end{proof}
\begin{remark}\label{rem:free_roots_deform_trivially_always}
  The proof does not require the presence of a node and the argument works just as well around a smooth point of a curve.
\end{remark}

\begin{definition}\label{def:deformations_of_rooted_node}
  Given a root $(\bar E,\bar s)$ on $\bar A / k$ let $F\colon \art_\Lambda \to \Sets$ be the functor of isomorphism classes of deformations of the rooted node $(\bar A / k, \bar E, \bar s)$. 
\end{definition}

Recall that $G\colon \art_{\Lambda} \to \Sets$ is the functor of infinitesimal deformations of the node $\bar A / k$. There is a natural transformation $F \to G$ obtained by forgetting the root. Lemma~\ref{lem:free_roots_deform_trivially} implies that if $(\bar E,\bar s)$ is a free root then the forgetful functor $F \to G$ is an isomorphism. Complementing Theorem~\ref{thm:univ-def-of-node} we have the following result.

\begin{theorem}
  If $(\bar E,\bar s)$ is not free then $F$ is pro-represented by $\Lambda[[\tau]]$. The natural transformation $F \to G$ obtained by forgetting the root corresponds to the map $\Lambda[[t]] \to \Lambda[[\tau]] : t \mapsto \tau^2$.
\end{theorem}

We will prove this theorem by constructing a universal family over $\Lambda[[\tau]]$, see Theorem~\ref{thm:univ-def-of-rooted-node}. To do this we need Faltings' classification of torsion-free modules.

\subsection{Classification of roots}

Fix $R \in \hart_\Lambda$ and $A = R[[x,y]]/(xy-\pi)$ for some $\pi \in \xm_R$. Define $\iota\colon A \to \bar A = k[[x,y]]/(xy)$ using $R \to R/\xm_R = k$. For the rest of this section, we will be interested in non-free roots.

\subsubsection{Faltings' construction} \label{sec:faltings-construction}

Let $p,q \in R$ be such that $pq = \pi$. Define $2\times 2$ matrices with entries in $A$: 
\[
  \alpha =  \begin{pmatrix} x & p \\  q & y \end{pmatrix}, \quad \beta = \begin{pmatrix} y & -p \\  -q & x \end{pmatrix}
\]
To avoid confusion, we may write $\alpha(p,q)$ and $\beta(p,q)$ instead. Clearly $\alpha \beta = \beta \alpha = 0$ but, moreover, we get an exact infinite periodic complex (see~\cite{faltings-bundles}):
\[
  \ldots \to A^{\oplus 2} \overset{\alpha}{\to} A^{\oplus 2} \overset{\beta}{\to} A^{\oplus 2} \overset{\alpha}{\to} A^{\oplus 2} \overset{\beta}{\to} A^{\oplus 2} \to \ldots
\]

\begin{definition}\label{def:std_res}
  Define $E(p,q)\subset A^{\oplus 2}$ to be the image of $\alpha$ or, equivalently, the kernel of $\beta$. Truncating the complex above we get a free resolution of $E(p,q)$. Whenever we refer to the standard resolution of $E(p,q)$ this is the one we mean.
\end{definition}

\begin{remark}
  The module $E(p,q)$ is relatively torsion-free and $R$-flat as shown in Construction 3.2 of~\cite{faltings-bundles}.
\end{remark}

\begin{remark}
  If either $p$ or $q$ is invertible, then $E(p,q)$ is free. As we are interested in non-free roots, from now on we assume $p,q \in \xm_R$. This forces $\pi$ to be in $\xm_R^2$.
\end{remark}

There is a natural isomorphism between $E(p,q)^\vee = \hom(E(p,q),A)$ and $E(q,p)$. In particular, when $p=q$ the module $E(p,p)$ is self-dual and we get a natural pairing $s_p \colon E(p,p)^2 \to A$.

\begin{definition}
  The natural pairing $s_p \colon E(p,p)^{2} \to A$ will be called the \emph{standard map}. 
\end{definition}

\begin{definition}\label{def:std_root}
  Let us refer to $(E(p,p),s_p)$ as a \emph{standard root on $(A / R,\iota)$}.
\end{definition}

\subsubsection{Properties of standard roots}

Given any bilinear map $b$ on $E(p,q)$ we can lift it to $\sym^2 A^{\oplus 2} \tos \sym^2 E(p,q)$ to get a morphism $\tilde b : \sym^2 A^{\oplus 2} \to A$. Letting $e_1,e_2$ be the standard generators of $A^{\oplus 2}$ and $e_1^2, e_1e_2, e_2^2 $ the corresponding generators of $\sym^2 A^{\oplus 2}$ we may uniquely identify $b$ with the values $b_0 \colonequals  \tilde b(e_1^2), b_1 \colonequals  \tilde b(e_1e_2), b_2 \colonequals  \tilde b(e_2^2)$. By abuse of notation we will write $b = (b_0,b_1,b_2)$. 

\begin{lemma}
  For a standard root $(E(p,p),s_p)$ we have $s_p=(x,p,y)$.
\end{lemma}
\begin{proof}
  Let $\langle \cdot,\cdot \rangle$ denote the natural pairing $A^{\oplus 2} \times A^{\oplus 2} \to A$. The identification of $E(p,p)^\vee$ with $E(p,p)$ makes it clear that if $e,f \in E(p,p)$ and $u,v \in A^{\oplus 2}$ are such that $e = \alpha(u)$ and $f=\alpha(v)$ then we have $s_p(e,f) = \langle u, \alpha(v) \rangle = \langle \alpha(u), v \rangle$.  Now, direct computation yields the result.
\end{proof}

\begin{lemma} \label{lem:scale-b}
  Any root $(E(p,p),b)$ on $A / R$ is isomorphic to $(E(p,p),s_p)$.
\end{lemma}
\begin{proof}
  Lemma 5.4.10~\cite{jarvis-torsion-free} states that $b = (ax,b_1,awy)$ where $a \in A^*$ and $w \in R^*$ such that $wp = p$.  Note here that as we are working with square roots of line bundles, the hypothesis of the cited lemma is satisfied (as stated in Corollary 5.4.9 \emph{loc.cit.}).

  Let $v$ be a square root of $w$ and consider the isomorphism $\mu : E \to E$ which descends from multiplication by $\begin{psmallmatrix} v & 0 \\   0 & v\inv  \end{psmallmatrix}$ on $A^{\oplus 2}$. Clearly $\mu^*b = a(vb_0,b_1,v\inv b_2) = va(x,v\inv b_1, y)$. By scaling $E$ we may now assume $va = 1$ and $b = (x,b_1,y)$ where we changed $b_1$.

  Since $\alpha(y,0)=\alpha(0,p)$ and $\alpha(0,x)=\alpha(p,0)$ we see that $pb_2 = y b_1$ and $pb_0= xb_1$. Which means $y(b_1-p) = x(b_1 -p) = 0$ (we used $wp =p$). But $\ann_A(x,y) = 0$ hence $b_1 = p$. 
\end{proof}

\begin{theorem}[Faltings] \label{thm:faltings}
  For any root $(E,b)$ on $A$, $\exists\, p \in \xm_R$ such that $(E,b) \iso (E(p,p),s_p)$.
\end{theorem}
\begin{proof}
In~\cite[Theorem 3.7]{faltings-bundles} Faltings classifies non-degenerate quadratic forms on $E$, or equivalently bilinear forms $b \colon  E^{2} \to A$. The conclusion is that $(E,b) \simeq (E(p,p),b')$ for some $p \in \xm_R$ and $b'$. Apply Lemma~\ref{lem:scale-b} to change $b'$ to the standard map $s_p$. 
\end{proof}

\begin{notation}
  If $\mu \in \hom(E(p,p),E(q,q))$ lifts to a map $\begin{psmallmatrix} a_{11} & a_{12}  \\  a_{21} & a_{22}  \end{psmallmatrix} : A^{\oplus 2} \to A^{\oplus 2}$ then we will write $\mu = \begin{bsmallmatrix} a_{11} & a_{12}  \\  a_{21} & a_{22}  \end{bsmallmatrix}$.
\end{notation}

\begin{lemma} \label{lem:iso-class}
$\isom((E(p,p),s_p),(E(q,q),s_q)) = \left\{\begin{bsmallmatrix} \varepsilon_1 & 0  \\  0 & \varepsilon_2  \end{bsmallmatrix} \mid \varepsilon_1,\,\varepsilon_2 \in \set{\pm 1},\, q = \varepsilon_1 \varepsilon_2 p\right\}$ 
\end{lemma}

\begin{proof}
  We adapt the proof of Proposition 4.1.12 of~\cite{jarvis-torsion-free}. An easy observation is that we can choose a lift of $\mu$ of the form  
  \[
    \begin{pmatrix} u_+(x) & v_+(x)  \\  v_-(y) & u_-(y) \end{pmatrix}
  \]
  where $u_+, v_+ \in R[[x]] \subset A$ and $u_-,v_- \in R[[y]] \subset A$. Now we simply have to calculate what it means to have $\mu^* s_p = s_q$ in terms of $u_{\pm}, v_{\pm}$. Using that $x$ (resp. $y$) does not annihilate $R[[x]]$ (resp. $R[[y]]$) we see immediately that $v_{\pm }=0$, $u_{\pm} \in \set{\pm 1}$ is forced. Then $q = u_+ u_- p$.
\end{proof}

\begin{notation}
  On the standard node $\bar A/k$ we will define $\bar E\colonequals E(0,0)$ and $\bar s \colonequals  s_0$.
\end{notation}

\begin{definition}\label{def:restriction_map}
  On $(A/ R,\iota)$ there is a \emph{natural restriction map} from $E(p,q)$ to $\bar E = E(0,0)$ which is the map $r$ completing the diagram below:
\[
  \begin{tikzcd}
    A^{\oplus 2} \arrow[d,"{\alpha(p,q)}"] \arrow[r,"\iota"]  & \bar A^{\oplus 2} \arrow[d,"{\alpha(0,0)}"] \\
    E(p,q) \arrow[r, dotted, "r"] & \bar E
  \end{tikzcd}
\]
\end{definition}

\begin{remark} \label{rem:unique-iso}
  Lemma~\ref{lem:iso-class} implies that choosing a restriction map $r$ rigidifies the root. That is, $\aut(E(p,p),s_p,r) = 1$. The following result takes this observation one step further.
\end{remark}

\begin{proposition} \label{prop:all-deformations-are-standard}
  Suppose that $(E,b,j)$ is a deformation of $(\bar E, \bar s)$. Then there exists precisely one $p \in R$ such that $(E,b,j) \simeq (E(p,p),s_p,r)$. Moreover, this isomorphism is unique.
\end{proposition}
\begin{proof}
  Uniqueness of the isomorphism follows from Remark~\ref{rem:unique-iso}. By Theorem~\ref{thm:faltings} we know that $(E,b) \simeq (E(p,p),s_p)$ for some $p\in R$. Picking one such isomorphism we may assume $(E,b,j) = (E(p,p),s_p,j)$ for some $j$. However, with our choice of identification, $j$ is not necessarily equal to the natural restriction $r$.  

  Let $\gamma = j \circ r\inv : (\bar E,\bar s) \to (\bar E, \bar s)$. Then $\gamma$ is uniquely defined by $\gamma(0) \in \set{\pm \id, \pm \begin{psmallmatrix} 1 & 0  \\  0 & -1  \end{psmallmatrix}}$. An isomorphism $\mu \colon  (E(p,p),s_p) \iso (E(q,q),s_q)$ commutes with $j$ and $r$ iff $\iota_*\mu \colon (\bar E, \bar s) \iso (\bar E, \bar s)$ is the inverse of $\gamma$. Having classified such $\mu$ in Lemma~\ref{lem:iso-class} we know that there exists precisely one $q$ and one $\mu$ which will restrict to $\gamma\inv$.
\end{proof}

\subsection{Universal deformation of rooted node}

Take a \emph{non-free} root $(\bar E,\bar s)$ on $\bar A / k$ and let $F\colon \art_\Lambda \to \Sets$ be the deformation functor of $(\bar A / k, \bar E, \bar s)$ (Definition~\ref{def:deformations_of_rooted_node}). 

\begin{theorem} \label{thm:univ-def-of-rooted-node}
  The ring $\Lambda[[\tau]]$ pro-represents $F$ via the universal family 
  \[
    ( \Lambda[[\tau]] \to \Lambda[[x,y,\tau]]/(xy-\tau^2) , \tau\mapsto 0, E(\tau,\tau), s_\tau,r).
  \]
\end{theorem}

\begin{proof}
  Given any deformation of the rooted node $(A / R, \iota, E ,b,j)$ we wish to show that there exists a unique map $\varphi \colon \Lambda[[\tau]] \to R$ such that $A$ is canonically isomorphic to $\Lambda[[x,y,\tau]]/(xy-\tau^2) \otimes_{\Lambda[[\tau]]} R$ and $\varphi_*(E(\tau,\tau),s_\tau,r) \simeq (E,b,j)$. Furthermore, that this isomorphism is unique.

  Proposition~\ref{prop:all-deformations-are-standard} shows that there exists a \emph{unique} $p\in R$ such that $(E,b,j)$ is (uniquely) isomorphic to $(E(p,p),s_p,r)$, moreover this implies $A = R[[x,y]]/(xy-\pi)$ with $\pi = p^2$. Define $\varphi$ by $\tau \mapsto p$. Since the maps $s_p$ and $r$ are natural, the pushforward of $(E(\tau,\tau),s_\tau,r)$ is (uniquely) isomorphic to $(E(p,p),s_p,r)$. 
  
  Choosing any other map $\tau \mapsto q$ would give a root that is not isomorphic to $(E,b,j)$. Thus we have proven the existence and uniqueness of the map $\varphi$ of the desired form.
\end{proof}

\subsection{Deformations of nodes with multiple roots} \label{app:multiple_roots}

Taking a multiple root in the sense of Definition~\ref{def:multiple-root} and restricting it to the formal neighbourhood of a node in a family of curves warrants the study of the objects defined here. Fix a positive integer $m$, let $(A/R,\iota)$ be a deformation of the node as in Definition~\ref{def:std_node} and let $(E_i,b_i)_{i=1}^m$ be a sequence of roots on $A/R$. 

\begin{notation}
  We will reorder the roots so that we may assume there is an $m' \in \{0,\dots,m\}$ such that a root $E_i$ is free if and only if $i > m'$.
\end{notation}

\begin{definition}
  A sequence of maps $(\varphi_i \colon F \to E_i^2)_{i=1}^m$ which satisfy the following conditions are called a \emph{local pre-sync data}:
(1) if $E_i$ is not free then $\varphi_i$ is an isomorphism, 
(2) if $E_i$ are free then all $\varphi_i$ are isomorphisms, 
(3) for all $i,j$ we have $b_i \circ \varphi_i = b_j \circ \varphi_j$.
\end{definition}

\begin{remark}
  It will be often more convenient to identify the sequence $(\varphi_i)_{i=1}^m$ with a sequence of isomorphisms $(E_1^2 \to E_i^2)_{i=1}^{m'}$. The free roots do not need additional synchronization as the maps $b_i$ already identify their squares with the target.
\end{remark}

\begin{definition}
  Let $(\varphi_i)_{i=1}^m$ be a local pre-sync data. If for every $i,j \le m'$ the natural maps $\sym^2\sym^2 E_i \to \sym^4 E_j$ factor through an isomorphism $\sym^4 E_i \to \sym^4 E_j$ then $(\varphi_i)_{i=1}^m$ is called a \emph{local sync data}.
\end{definition}

Believing this can be no source of confusion, we will abuse notation and call a tuple $(E_i,b_i,\varphi_i)_{i=1}^m$ a \emph{multiple-root on $A$}, where $(\varphi_i)_{i=1}^m$ is a local sync data for the roots $(E_i,b_i)_{i=1}^m$. An \emph{isomorphism between two multiple-roots} $(E_i,b_i,\varphi_i \colon F \to E_i^2)_{i=1}^m$ and $(E_i',b_i',\varphi_i'\colon F' \to (E_i')^2)_{i=1}^m$ is a sequence of isomorphisms $(h_i \colon E_i \to E_i')_{i=1}^m$ for which there exists an isomorphism $f \colon F \to F' $ satisfying $h_i^2 \circ \varphi_i = \varphi_i' \circ f$.

Using the map $\iota \colon A \to \bar A$ we can define the push-forward $\iota_*(E_i,b_i,\varphi_i)_{i=1}^{m}$ to be the tuple $(\iota_*E_i,\iota_*b_i,\iota_* \varphi_i)_{i=1}^m$ with $\iota_* E_i = E_i\otimes_A \bar A$ and $\iota_* b_i$ as in Definition~\ref{not:iota_push} and $\iota_* \varphi_i \colon F\otimes_A \bar A \to \iota_* E_i$ the natural restriction of $\varphi_i$. 

Fix a multiple-root $\bar\xi\colonequals (\bar E_i, \bar b_i, \bar \varphi_i)_{i=1}^m$ on the standard node $\bar A$. Suppose $\xi\colonequals (E_i,b_i,\varphi_i)_{i=1}^m$ is a multiple-root on $A$ such that there exists a sequence of isomorphism $h_i \colon \iota_* E_i \to \bar E_i$ giving rise to an isomorphism between the multiple-roots $\iota_* \xi$ and $\bar \xi$. Then, the sequence of maps $j_i \colon E_i \to \iota_*E_i \overset{h_i}{\to} \bar E_i$ will be called a \emph{restriction map} of $\xi$ to $\bar\xi$. The tuple $(A/R,\iota,(E_i,b_i,\varphi_i,j_i)_{i=1}^m)$ will be called a \emph{deformation of $(\bar A/k,\bar\xi)$}. An isomorphism between two such deformations must commute with the restriction maps.

\begin{definition}\label{def:H}
  Let $H \colon \art_\Lambda \to \Sets$ be the functor assigning to each $R$ the set of isomorphism classes of deformations of $(\bar A/k ,\bar\xi)$.
\end{definition}

\begin{remark}\label{rem:replace_with_squares}
  Given a sync data $(\varphi_i)_{i=1}^m$, we can replace it with a sequence of isomorphisms $(E_1^2 \to E_i^2)_{i\le m'}$. 
\end{remark}

In order to construct a universal family for the functor $H$, we will reformulate the definition of local sync data by analyzing isomorphisms between squares of roots. Let $(E(p,p),s_p)$ and $(E(q,q),s_q)$ be standard roots on $A/R$ as in Definition~\ref{def:std_root}. Suppose $\varphi \colon E(p,p)^2 \isoto E(q,q)^2$ is an isomorphism satisfying $s_p = s_q \circ \varphi$.

\begin{proposition}\label{prop:iso_square}
  The map $\tphi \colon \sym^2\sym^2 E(p,p) \to \sym^4 E(q,q)$ induced by $\varphi$ factors through an isomorphism $\sym^4 E(p,p) \to \sym^4 E(q,q)$ if and only if there is an isomorphism $\mu\colon E(p,p) \to E(q,q)$ such that $\varphi = \mu^2$.
\end{proposition}
\begin{proof}
  Let $A^{\oplus 2} \to E(p,p)$ be the first step in the standard resolution (Definition~\ref{def:std_res}). Denote the images of $(1,0)$ and $(0,1)$ by $\xi_1,\xi_2 \in E(p,p)$ respectively. Similarly define $\zeta_1, \zeta_2 \in E(q,q)$. We need to characterize the condition that 
  \begin{equation}\label{eq:sym}
    \tphi(\xi_1^2\xi_2^2 - (\xi_1\xi_2)^2)=0. 
  \end{equation}

  Identify $\sym^2 A^{\oplus 2}$ with $A^{\oplus 3}$. Using standard arguments (\S A2.3~\cite{eisenbud-comm}) we find presentations of $E(p,p)^2$ and $E(q,q)^2$, for instance:
  \begin{equation}
    E(p,p)^{2} = \coker \left( L(p,p): A^{\oplus 4} \to A^{\oplus 3}  \right)
  \end{equation}
where 
\begin{equation}\label{eq:presentation}
  L(p,p) = %
  \begin{pmatrix}
    y  & -p & 0  & 0 \\
    -p & x  & y  & -p \\
    0  & 0  & -p & x
  \end{pmatrix}.
\end{equation}
We want to choose a simple lift of $\varphi$ to a map $A^{\oplus 3} \to A^{\oplus 3}$. Using the relations provided by $L(p,p)$ and $L(q,q)$ we may construct a lift $A^{\oplus 3} \to A^{\oplus 3}$ such that the corresponding $3\times 3$ matrix contains no terms involving $y$ in the first row, $x$ or $y$ in the second row, $x$ in the third row. In fact, such a lift is unique and we will denote it by $\hat\varphi$. It is easily seen that $\varphi$ commutes with $s_p$ and $s_q$ iff the lift can be expressed as 
\begin{equation}\label{eq:representation}
  \hat\varphi = %
  \begin{pmatrix}
    1  & 0 & 0  \\
    a_1 & u  & a_2   \\
    0  & 0  & 1 
  \end{pmatrix},
\end{equation}
where $u\in R^{\times}$ is such that $p=uq$ and $a_1,a_2 \in \ann(q) = \ann(p)$.

In terms of the entries of this matrix we may now write
\[
  \tphi(\xi_1^2\xi_2^2 - (\xi_1\xi_2)^2)= \zeta_1^2\zeta_2^2 + a_1\zeta_2^2 (\zeta_1\zeta_2) + a_2 \zeta_1^2(\zeta_1\zeta_2) + (a_1a_2-u^2)(\zeta_1\zeta_2)^2. 
\]
As before, we can calculate a presentation of $E(q,q)^4$. This presentation looks similar to Equation~\ref{eq:presentation} but with an extra block. It follows that the equality (\ref{eq:sym}) is satisfied iff $a_1=a_2=0$ and $u^2=1$. Since $R$ is a complete local ring of characteristic not two, we have $u^2=1$ iff  $u = \pm 1$.

Supposing (\ref{eq:sym}) holds, we conclude $p=uq=\pm q$. By Lemma~\ref{lem:iso-class} we can find isomorphisms $\mu$ between the two roots. Moreover, $\mu$ admits a representation $\begin{bsmallmatrix} \varepsilon_1 & 0  \\  0 & \varepsilon_2  \end{bsmallmatrix}$ where $\varepsilon_1,\varepsilon_2 \in \{-1,1\}$ and $\varepsilon_1\varepsilon_2 = u$. It is now clear that $\varphi=\mu^2$. Conversely, if $\varphi=\mu^2$ then the representation $\hat\varphi$ is of the required form and (\ref{eq:sym}) holds.
\end{proof}

\begin{definition}\label{def:parity}
  Let $\varphi$ be the square of an isomorphism $\mu\colon (E(p,p),s_p) \to (E(q,q),s_q)$. Then the entry $u \in \{\pm 1\}$ appearing in (\ref{eq:representation}) will be called the \emph{parity of $\varphi$}. If $p=q$ and $\mu = \pm \id$ then $u=1$ otherwise $u=-1$.
\end{definition}

\begin{remark}\label{rem:central_fiber}
  Observe that the parity of $\varphi$ can be determined by restricting $\varphi$ to the central fiber $\bar A/k$. In particular, the root with multiple roots $(\bar A /k , \bar \xi)$ completely determines the local sync data for any of its deformations.
\end{remark}

\begin{notation}\label{not:root_convention}
  For our fixed root $\bar \xi$ on $\bar A/k$ let us identify the non-free roots with the standard root $(\bar E,\bar s) = (E(0,0),s_0)$. We can and will choose this identification so that the parity of each of the isomorphisms $(\bar E_1^2 \to \bar E_i^2)_{i=1}^{m'}$ induced by the local sync data $(\bar \varphi_i)_{i=1}^{m'}$ is 1.
\end{notation}

The following lemma shows how we can put a deformations of $\bar \xi$ into standard form. In light of the Remark~\ref{rem:central_fiber} above, we will omit the local sync data of these deformations as they are already determined by the local sync data of $\bar \xi$. 

\begin{lemma} \label{lem:unique-p-for-multiple-roots}
  Let $(A/R,\iota,(E_i,b_i,\varphi_i,j_i)_{i=1}^m)$ be a deformation of $(\bar A/k,\bar\xi)$. Then $\exists ! p \in R$ such that for all $i\le m'$ we have $(E_i,b_i,j_i) = (E(p, p),s_p,r)$ with $r$ the natural restriction map from Definition~\ref{def:restriction_map}.
\end{lemma}

\begin{proof}
  By Proposition~\ref{prop:all-deformations-are-standard} we know that $\forall i \le m'$ $\exists ! p_i \in R$ such that $(E_i,b_i,j_i)=(E(p_i,p_i),s,r)$. Let $p = p_1$. From Proposition~\ref{prop:iso_square} we observe that all roots $(E_i,b_i)$ must be isomorphic and by Lemma~\ref{lem:iso-class} we conclude $p_i \in \set{\pm p}$. 
  
  If $p=0$ then there is nothing more to prove so assume $p \neq 0$. Then the sign of $p_i$ is completely determined by the local sync data and in particular by the sign of $u$ in (\ref{eq:representation}). We choose the identification of the roots on the central fiber so that this sign is always $1$.
\end{proof}

Since free roots have no non-trivial deformations and they face no obstructions in deforming, if all $m$ roots in $\bar \xi$ are free ($m'=0$) then the deformation functor $H$ of $(\bar A/k,\bar \xi)$ agrees with the deformation functor of $\bar A /k$ alone (Lemma~\ref{lem:free_roots_deform_trivially}). Therefore, we will now assume $m' > 0$.

\begin{theorem}\label{thm:universal-def-of-mult-roots}
  The functor of infinitesimal deformations $H$ of $(\bar A/k , \bar \xi)$ is pro-represented by $\Lambda[[\tau]]$ with the universal deformation given by $(\Lambda[[\tau]] \to \Lambda[[\tau,x,y]]/(xy-\tau^2),\tau\mapsto 0)$ with the first $m'$ roots equal to $(E(\tau,\tau),s_\tau,r)$ and the remaining roots free. The local sync data are trivially obtained from those of $\bar \xi$, which by our Notation~\ref{not:root_convention} correspond to the squares of the identity maps. 
\end{theorem}

\begin{proof}
  Let $(A/R,\iota)$ be a deformation of the node and let $(\cR,\xj) = (E_i,b_i,j_i)_{i=1}^m$ together with the local sync data $(\varphi_i)_{i=1}^m$ be a deformation of $(\bar A/k , \bar \xi)$. Any map $\nu\colon \Lambda[[\tau]] \to R$ is uniquely defined by the choice of $p \in R$ for which $\tau \mapsto p$.  Lemma~\ref{lem:unique-p-for-multiple-roots} tells us that there is a unique $p \in R$ for which $\nu_*(E( \tau , \tau),s_\tau,r) = (E_i,b_i,j_i)$. This proves the existence and uniqueness of $\nu$ provided we show that the synchronizations agree. This is done by reducing to the central fiber, where the compatibility of the synchronizations is immediate.
\end{proof}

\subsection{Formal neighbourhoods in general}

Let $k$ be any field. Then by Cohen structure theorem there exists a \emph{universal coefficient ring}, which we will denote by $\xo_k$, so that any complete local ring with residue field $k$ contains a copy of $\xo_k$. If $\chr k = 0$ then $\xo_k = k$.

Let $S$ be scheme over $k$ and let $s \colon\spec k \to S$ be $k$-valued point. The usual complete local ring $\hat\co_{S,s}$ pro-represents the functor $\art_{\xo_k} \to \Sets$ defined by $A \mapsto \hom_s(A,S)$, where the subscript $s$ indicates that the morphisms must restrict to $s$ on the residue field. This allows one to generalize the definition of a complete local ring to more general situations, for instance to geometric points of Deligne--Mumford stacks.

If $k'$ is any field and $s' \colon \spec k' \to S$ is a $k'$-valued point of $S$ then we can still define the functor $Q_{s'} \colon \art_{\xo_{k'}} \to \Sets$ via the rule $A \mapsto \hom_{s'}(A,S)$. If $s'$ factors through a $k$-valued point $s \colon \spec k \to S$ then $Q_{s'}$ is pro-represented by the complete local ring $\hat \co_{S,s} \otimes_{\xo_{k}} \xo_{k'}$.  

For a Deligne--Mumford stack $\cm$ and a point $p \colon \spec k \to \cm$ the functor $Q_p \colon \art_{\xo_{k}} \to \Sets$ can be defined as above. This functor is seen to be pro-representable by using any \'etale chart. 

\begin{definition}\label{def:local_rings}
  Let $\cm$ be a Deligne--Mumford stack and let $p \colon \spec k \to \cm$ be any $k$-valued point. The complete local ring pro-representing $Q_p$ will be denoted by $\hat \co_{\cm,p}$. The functor $Q_p$ is called the \emph{local deformation functor of $p$} and $\spec \co_{\cm,p}$ is called the \emph{formal neighbourhood} of $p$. 
\end{definition}

\printbibliography
\vfill
\end{document}